\numberwithin{equation}{section}
\newtheorem{thm}[equation]{Theorem}
\newtheorem{prop}[equation]{Proposition}
\newtheorem{cor}[equation]{Corollary}
\theoremstyle{definition}
\newtheorem{rem}[equation]{Remark}
\newtheorem{example}[equation]{Example}
\newtheorem{dfn}[equation]{Definition}
\newtheorem{ntt}[equation]{}
\newcommand{\llarrow}{\mathrel{\vcenter{\vbox{\offinterlineskip
\hbox{$\leftarrow$}\hbox{$\leftarrow$}}}}}
\newcommand{\lllarrow}{\mathrel{\vcenter{\vbox{\offinterlineskip
\hbox{$\leftarrow$}\hbox{$\leftarrow$}\hbox{$\leftarrow$}}}}}
\newcommand{\zz}{\mathbb{Z}}
\newcommand{\aff}{\mathrm{aff}}
\newcommand{\qq}{\mathbb{Q}} 
\newcommand{\laz}{\mathbb{L}}
\newcommand{\ff}{\mathbb{F}}
\newcommand{\op}{\mathrm{op}}
\newcommand{\ta}{\mathbb{T}}
\newcommand{\res}{\mathrm{res}}
\DeclareMathOperator{\codim}{\mathrm{codim}}
\newcommand{\Mot}{\mathcal{M}}
\DeclareMathOperator{\End}{\mathrm{End}}
\newcommand{\A}{\mathrm{A}}
\newcommand{\B}{\mathrm{B}}
\newcommand{\C}{\mathrm{C}}
\newcommand{\D}{\mathrm{D}}
\newcommand{\E}{\mathrm{E}}
\newcommand{\F}{\mathrm{F}}
\newcommand{\G}{\mathrm{G}}
\DeclareMathOperator{\Aut}{\mathrm{Aut}}
\DeclareMathOperator{\Hom}{\mathrm{Hom}}
\DeclareMathOperator{\PGL}{\mathrm{PGL}}
\DeclareMathOperator{\SL}{\mathrm{SL}}
\DeclareMathOperator{\Spec}{\mathrm{Spec}}
\DeclareMathOperator{\CH}{\mathrm{CH}}
\DeclareMathOperator{\Pic}{\mathrm{Pic}} 
\DeclareMathOperator{\ind}{\mathrm{ind}} 
\newcommand{\MS}{\mathrm{MS}}
\newcommand{\Nrd}{\mathrm{Nrd}}
\newcommand{\SB}{\mathrm{SB}}
\newcommand{\Br}{\mathop{\mathrm{Br}}}
\newcommand{\Spin}{\operatorname{\mathrm{Spin}}}
\newcommand{\GL}{\operatorname{\mathrm{GL}}}
\newcommand{\Fsep}{F_{{\mathrm{sep}}}}
\newcommand{\sep}{\mathrm{sep}}
\newcommand{\FGL}{\mathrm{FGL}}
\newcommand{\Ker}{\operatorname{Ker}}
\newcommand{\et}{{\text{et}}}
\title[Invariants and cohomology]{Cohomological invariants of algebraic groups and the Morava $K$-theory}
\author{Nikita Semenov}
\thanks{The author gratefully acknowledges the support of the Sonderforschungsbereich/Transregio 45 ``Periods,
moduli spaces, and arithmetic of algebraic varieties'' (Bonn-Essen-Mainz) and Universit\'e Paris~13.}
\keywords{Linear algebraic groups, torsors, cohomological invariants, Tits algebras, oriented cohomology theories,
Morava $K$-theory, motives.}
\subjclass[2010]{20G15, 11E72, 19E15}
\date{}
\begin{document}

\begin{abstract}
In the present article we discuss different approaches to cohomological invariants of algebraic groups over a
field. We focus on the Tits algebras and on the Rost invariant
and relate them to the Morava $K$-theory.
Furthermore, we discuss oriented cohomology theories of affine varieties and the Rost motives for different
cohomology theories in the sense of Levine--Morel.
\end{abstract}

\maketitle

\section{Introduction}
The notion of a Tits algebra was introduced by Jacques Tits in his celebrated
paper on irreducible representations \cite{Ti71}. This invariant of a linear algebraic
group $G$ plays a crucial role in the computation of the $K$-theory of twisted 
flag varieties by Panin \cite{Pa94} and in the index reduction formulas by Merkurjev,
Panin and Wadsworth \cite{MPW96}. It has important applications to the classification of linear algebraic groups
and to the study of associated homogeneous varieties.

Tits algebras are examples of cohomological invariants of algebraic groups of degree $2$.
The idea to use cohomological invariants in the classification of algebraic groups goes back to Jean-Pierre Serre.
In particular, Serre conjectured the existence of an invariant of degree $3$ for groups of type $\F_4$ and $\E_8$.
This invariant was later constructed by Markus Rost for all $G$-torsors, where
$G$ is a simple simply-connected algebraic group, and is now called the Rost invariant (see \cite{GMS03}).

Furthermore, the Milnor conjecture (proven by Voevodsky) provides a classification of quadratic forms over fields
in terms of the Galois cohomology, i.e., in terms of cohomological invariants.

In the present article we discuss different approaches to Tits algebras and generalize some of them
to invariants of higher degree. In particular, we consider the Morava $K$-theories, which are the
universal oriented cohomology theories in the sense of Levine--Morel with respect to the Lubin--Tate formal
group law (see Section~\ref{morava}). It turns out that the Morava $K$-theories (more precisely
the Morava-motives) detect, whether certain cohomological
invariants of an algebraic group are zero or not. In particular, the second Morava $K$-theory is responsible
for the Rost invariant (see Theorem~\ref{moravarost}). We remark that in the same spirit Panin showed in \cite{Pa94} that the
Grothendieck's $K^0$ functor detects the triviality of the Tits algebras.

Besides that, we discuss another approach to cohomological invariants which uses an exact sequence
of Voevodsky~\eqref{voev} (see below). For example, this sequence was used in \cite{Sem13} to construct an invariant
of degree $5$ modulo $2$ for some groups of type $\E_8$ and to solve a problem posed by Serre. Besides this,
we discuss oriented cohomology theories of affine varieties and the Rost motives for different
cohomology theories in the sense of Levine--Morel.

We try to keep the exposition elementary and try to avoid technical generalizations, which would rather hide the ideas.
For example, we mostly assume that the groups under considerations are of inner type, though this can be avoided
introducing more notation in the formulae.

It is quite amazing that different ideas from algebra, geometry and topology come together, when dealing with cohomological invariants
of algebraic groups. For example, the Tits algebras are related to representation theory, $K$-theory, but also
to motivic cohomology of simplicial schemes. Looking at the invariants of higher degree one finds relations with algebraic
cobordism of Levine--Morel, classifying spaces of algebraic groups (Totaro, Morel, Voevodsky) and motives;
see e.g. \cite{Me13}, \cite{MNZ13}, \cite{SV14}.

{\bf Acknowledgements.} I would like to thank sincerely Alexey Ananyevskiy, Alexander Neshitov,
and Maksim Zhykhovich for discussions and e-mail conversations on the subject of the article.
I started to work on this article during my visit to University Paris 13 in 2014. I would like to express
my sincerely gratitude to Anne Qu\'eguiner-Mathieu for her hospitality and numerous useful discussions.  

\section{Definitions and notation}

In the present article we assume that $F$ is a field of characteristic $0$. This assumption is not needed
at the most places. In some places it can be removed by changing the \'etale topology by the fpqc topology.
Nevertheless, we would like to avoid a too technical exposition. Aside from that, at some places (e.g. when we consider the
Morava $K$-theory) the assumption on the characteristic is needed. By $\Fsep$ we denote a separable closure of $F$.

Let $G$ be a semisimple linear algebraic group over a field $F$ (see \cite{Springer}, \cite{Inv}, \cite{GMS03}).
A $G$-torsor over $F$ is an algebraic variety $P$ equipped with an action of $G$ such that
$P(\Fsep)\ne\emptyset$ and the action of $G(\Fsep)$ on $P(\Fsep)$ is simply transitive.

The set of isomorphism classes of $G$-torsors over $F$ is a pointed set (with the base point given by
the trivial $G$-torsor $G$) is in natural one-to-one correspondence with the (non-abelian) Galois cohomology set $H_{\et}^1(F,G)$.

Let $A$ be some algebraic structure over $F$ (e.g. an algebra or quadratic space) such that $\Aut(A)$ is an
algebraic group over $F$. Then an algebraic structure $B$ is called a {\it twisted form} of $A$, if over a separable
closure of $F$ the structures $A$ and $B$ are isomorphic. There is a natural bijection between $H_{\et}^1(F,\Aut(A))$ and the set
of isomorphism classes of the twisted forms of $A$.

For example, if $A$ is an octonion algebra over $F$, then
$\Aut(A)$ is a group of type $\G_2$ and $H_{\et}^1(F,\Aut(A))$ is in $1$-to-$1$ correspondence with the twisted forms
of $A$, i.e., with the octonion algebras over $F$ (since any two octonion algebras over $F$ are isomorphic over a
separable closure of $F$ and since any algebra, which is isomorphic to an octonion algebra over a separable closure
of $F$, is an octonion algebra).
                                                                             
By $\qq/\zz(n)$ we denote the Galois-module $\varinjlim\mu_l^{\otimes n}$ taken over all $l$ (see \cite[p.~431]{Inv}).

In the article we use notions from the theory of quadratic forms over fields (e.g. Pfister-forms, Witt-ring). We follow \cite{Inv}, \cite{Lam},
and \cite{EKM}. Further, we use the notion of motives; see \cite{Ma68}, \cite{EKM}.

\section{Algebraic constructions of Tits algebras}
In this section we will briefly describe two classical constructions of Tits algebras following Tits \cite{Ti71}.

\begin{ntt}[Construction using the representation theory of algebraic groups]
Let $G_0$ be a split semisimple algebraic group of rank $n$ over $F$ and $T$ be a split maximal torus of $G_0$. Denote by $\widehat T$
the group of characters of $T$. This is a free abelian group of rank $n$.
Then there is a natural one-to-one correspondence between the isomorphism classes of the irreducible finite dimensional representations of $G_0$
and the elements in $\Lambda_+\cap\widehat T$, where $\Lambda_+$ denotes the cone of dominant weights.
This correspondence associates with an irreducible representation of $G_0$ its highest weight. We remind that
we have a global assumption that $\mathrm{char}\,F=0$. Nevertheless, we remark that this
one-to-one correspondence holds over fields of arbitrary characteristic, but for a given highest weight the
dimension of the respective irreducible representation depends on the characteristic of the field.

Let now $G$ be an arbitrary (not necessarily split) semisimple algebraic group over $F$ which is a twisted form of $G_0$.
A Tits algebra of $G$ corresponding to an element $\omega\in\Lambda_+\cap\widehat T$
is a central simple algebra $A$ over $F$ such that there exists a group homomorphism $\rho\colon G\to\GL_1(A)$
such that the representation $\rho\otimes\Fsep\colon G\otimes\Fsep\to\GL_1(A\otimes_F\Fsep)$ of the split
group $G\otimes\Fsep$ is the representation with the highest weight $\omega$.

Let $\Gamma$ denote the absolute Galois group of $F$. Tits showed that for any $\Gamma$-invariant
$\omega$ the Tits algebra exists and is unique up to an isomorphism. Moreover, there exists a group homomorphism
(called the Tits homomorphism)
$$\beta\colon(\widehat T/\Lambda_r)^\Gamma\to\Br(F)$$
$$\lambda\mapsto [A_\lambda]$$
where $\Lambda_r$ is the root lattice and $A_\lambda$ is the Tits algebra of $G$ of the weight $\chi(\lambda)$,
where $\chi(\lambda)\in\Lambda_+\cap\widehat T$ is a unique representative of $\lambda$ in the coset $\widehat T/\Lambda_r$.
If $G$ is a group of inner type, then the action of $\Gamma$ on $\widehat T/\Lambda_r$ is trivial.

\begin{example}
1). If $G=\SL_1(A)$, where $A$ is a central simple algebra of degree $n+1$, then the Tits algebra
of the first fundamental weight $\omega_1$ (corresponding to the standard representation of $\SL_{n+1}$)
is the algebra $A$ itself. The Tits algebra for the last fundamental weight $\omega_{n}$ (corresponding
to the dual representation) is the opposite algebra $A^{\op}$.

2). If $G=\Spin_{2n+1}(q)$, where $q$ is a regular quadratic form of dimension $2n+1$, then
the Tits algebra of the weight $\omega_n$ (enumeration of simple roots follows Bourbaki) is the even
Clifford algebra of $q$. This corresponds to the spinor representation. If $n>1$, then the Tits algebra of $G$ for the standard representation with the highest
weight $\omega_1$ is the split matrix algebra of degree $2n+1$.

3). If $G$ is a group of inner type $\E_6$ (resp. of type $\E_7$), then the Tits algebra of the weight $\omega_1$
(resp. $\omega_7$) is a central simple algebra of degree $27$, index dividing $27$ and exponent dividing $3$
(resp. of degree $56$, index dividing $8$ and exponent dividing $2$).

4). The Tits algebras of groups of types $\G_2$, $\F_4$ and $\E_8$ are split matrix algebras.
\end{example}

\end{ntt}

\begin{ntt}[Construction using the boundary homomorphism]
Another classical construction of Tits algebras goes as follows.

Let $G$ be a split semisimple algebraic group over $F$ with center $Z=Z(G)$.
The short exact sequence
$$1\to Z\to G\to G/Z\to 1$$ of algebraic groups induces the long exact sequence of cohomology
$$H_{\et}^1(F,G)\to H_{\et}^1(F,G/Z)\xrightarrow{\partial} H_{\et}^2(F,Z).$$
We remark that $G/Z$ is the adjoint group of the same type as $G$.

For an irreducible representation $\rho\colon G\to\GL_n$ denote by $\lambda_\rho$ the restriction
of $\rho$ to the center $Z$. Then $\lambda_\rho$ is a homomorphism from $Z$ to $Z(\GL_n)=\mathbb{G}_m$,
i.e., $\lambda_\rho\in\Hom(Z,\mathbb{G}_m)=\widehat T/\Lambda_r$.

Consider the composite map
$$H_{\et}^1(F,G/Z)\xrightarrow{\partial}H_{\et}^2(F,Z)\xrightarrow{(\lambda_\rho)_*}H_{\et}^2(F,\mathbb{G}_m)=\Br(F).$$

Then an element $\xi\in H_{\et}^1(F,G/Z)$ maps under this composite map to the class of the Tits algebra $A_\rho$ of the (inner)
twisted group ${}_\xi G$ corresponding to the representation $\rho$.
(Instead of a split $G$ one can start with a quasi-split group $G$. Then the analogous construction
will cover all twisted forms of $G$.)

\begin{example}
Let $G=\SL_n$ and let $\rho$ be the standard $n$-dimensional representation of $G$. Then $Z=\mu_n$, $G/Z=\PGL_n$,
and an element $\xi\in H_{\et}^1(F,\PGL_n)$ corresponds to a central simple algebra $A$ over $F$ of degree $n$.
The $H_{\et}^2(F,Z)={}_n\Br(F):=\{x\in\Br(F)\mid nx=0\}\subset\Br(F)$ and the boundary homomorphism $\partial$ maps $\xi$ to the class of $A$ in the Brauer group of $F$.
The composite map $(\lambda_\rho)_*\circ\partial$ maps $\xi$ also to $[A]$, which is the Tits algebra for $\rho$.

Let now $G=\PGL_n$ and $\rho$ be an irreducible representation of $G$. Then $Z=1$ and the Tits algebra of $\rho$ is a
split matrix algebra. This corresponds to the fact that the elements of the root lattice $\Lambda_r$ map to
$0$ under the Tits homomorphism.
\end{example}
\end{ntt}

\section{Geometric constructions of Tits algebras}

\begin{ntt}[Tits algebras and the Picard group]
Another construction of Tits algebras is related to the Hochschild--Serre spectral sequence.
For a smooth variety $X$ over $F$ one has $$H_{\et}^p(\Gamma,H_{\et}^q(X_{\sep},\mathcal{F}))\Rightarrow H_{\et}^{p+q}(X,\mathcal{F})$$
where $X_{\sep}=X\times\Fsep$ and $\mathcal{F}$ is an \'etale sheaf. The induced $5$-term exact sequence is
$$0\to H_{\et}^1(\Gamma,H_{\et}^0(X_{\sep},\mathcal{F}))\to H_{\et}^1(X,\mathcal{F})\to H_{\et}^0(\Gamma,H_{\et}^1(X_{\sep},\mathcal{F}))\to H_{\et}^2(\Gamma,H_{\et}^0(X_{\sep},\mathcal{F}))$$

Let $\mathcal{F}=\mathbb{G}_m$ and $X$ be a smooth projective variety. Then
$$H_{\et}^1(\Gamma,H_{\et}^0(X_{\sep},\mathbb{G}_m))=H_{\et}^1(\Gamma,\Fsep^\times)=0$$ by Hilbert 90,
$H_{\et}^1(X,\mathbb{G}_m)=\Pic(X)$, $H_{\et}^0(\Gamma,H_{\et}^1(X_{\sep},\mathbb{G}_m))=H_{\et}^0(\Gamma,\Pic X_{\sep})=(\Pic X_{\sep})^{\Gamma}$,
and $H_{\et}^2(\Gamma,H_{\et}^0(X_{\sep},\mathbb{G}_m))=H_{\et}^2(\Gamma,\Fsep^\times)=\Br(F)$. Thus, we obtain an exact sequence
\begin{equation}\label{hochsch}
0\to\Pic X\to(\Pic X_{\sep})^\Gamma\to\Br(F)
\end{equation}

The map $\Pic X\to (\Pic X_{\sep})^\Gamma$ is the restriction map and the homomorphism $$(\Pic X_{\sep})^\Gamma\xrightarrow{f}\Br(F)$$
was described by Merkurjev and Tignol in \cite[Section~2]{MT95} when $X$ is the variety of Borel subgroups of a semisimple
algebraic group $G$. Namely, the Picard group of $X_{\sep}$ can be identified with the free abelian group with basis
$\omega_1,\ldots,\omega_n$ consisting of the fundamental weights. If $\omega_i$ is $\Gamma$-invariant (e.g. if $G$
is of inner type), then
$f(\omega_i)=[A_i]$ is the Tits algebra of $G$ corresponding to the (fundamental) representation with the highest weight $\omega_i$
(see \cite{MT95} for a general description of the homomorphism $f$).

Moreover, one can continue the exact sequence~\eqref{hochsch}, namely, the sequence
$$0\to\Pic X\to(\Pic X_{\sep})^\Gamma\to\Br(F)\to\Br(F(X))$$
is exact, where the last map is the restriction homomorphism (see \cite{MT95}).
\end{ntt}

\begin{ntt}[Tits algebras and $K^0$]\label{titsk0}
There is another interpretation of the Tits algebras related to Grothendieck's $K^0$ functor.
Let $G$ be a semisimple algebraic group over $F$ of inner type and $X$ be the variety of Borel subgroups of $G$.
By Panin \cite{Pa94} the $K^0$-motive of $X$ is isomorphic to a direct sum of $|W|$ motives, where $W$ denotes the Weyl
group of $G$. Denote these motives by $L_w$, $w\in W$.

For $w\in W$ consider $$\rho_w=\sum_{\{\alpha_k\in\Pi\mid w^{-1}(\alpha_k)\in\Phi^-\}}w^{-1}(\omega_k)\in\Lambda,$$
where $\Pi$ is the set of simple roots and $\Phi^-$ is the set of negative roots. Using these elements
one can describe the {\it Steinberg basis} of $K^0(X_K)$
over a splitting field $K$ of $G$; see \cite[Section~12.5]{Pa94}, \cite[Section~2]{QSZ12}.

Over a splitting field $K$ of $G$, the motive $(L_w)_K$ is isomorphic to a Tate motive and the restriction
homomorphism $K^0(L_w)\to K^0((L_w)_K)=\zz$ is an injection $\zz\to\zz$ given by the multiplication by $\ind A_w$,
where $[A_w]=\beta(\rho_w)\in\Br(F)$ for the Tits homomorphism $\beta$. In particular, different motives $L_w$ can be
parametrized by the Tits algebras.

Moreover, if all Tits algebras
of $G$ are split, then the $K^0$-motive of $X$ is a direct sum of Tate motives over $F$.
\end{ntt}

\begin{ntt}[Tits algebras and simplicial varieties]
Let $Y$ be a smooth irreducible variety over $F$. Consider the standard simplicial scheme $\mathcal{X}_Y$ associated with $Y$,
i.e. the simplicial scheme 
$$Y\llarrow Y\times Y\lllarrow Y\times Y\times Y\cdots$$

Then for all $n\ge 2$ there is a long exact sequence of cohomology groups (see \cite[Cor.~2.2]{Ro07} and \cite[Proof of Lemma~6.5]{Vo11}):
\begin{equation}\label{voev}
0\to H_{\mathcal{M}}^{n,n-1}(\mathcal{X}_Y,\qq/\zz)\xrightarrow{f} H^n_{\et}(F,\qq/\zz(n-1))\to H_{\et}^n(F(Y),\qq/\zz(n-1)),
\end{equation}
where $H_{\mathcal{M}}^{n,n-1}$ is the motivic cohomology and the homomorphism $f$ is induced by the change of topology
(from Nisnevich to \'etale).

Let $n=2$ and $Y$ be the variety of Borel subgroups of an algebraic group $G$ of inner type. Then $H_{\et}^2(F,\qq/\zz(1))=\Br(F)$
and we have a long exact sequence
$$0\to H_{\mathcal{M}}^{2,1}(\mathcal{X}_Y)\xrightarrow{f}\Br(F)\to\Br(F(Y))$$
Thus, $H_{\mathcal{M}}^{2,1}(\mathcal{X}_Y)=\Lambda/\widehat T$,
where $\Lambda$ is the weight lattice, and $f$ turns out to be the Tits homomorphism.
This gives one more interpretation of the Tits algebras via a change of topology.
\end{ntt}

\section{Higher cohomological invariants}

The Tits algebras are examples of cohomological invariants of $G$-torsors of degree $2$ (since they lie in
$H_{\et}^2(F,\mathbb{G}_m)=\Br(F)$). In general, a cohomological invariant of $G$-torsors of degree $n$ with values
in a Galois-module $M$ is a transformation of functors $H_{\et}^1(-,G)\to H_{\et}^n(-,M)$ from the category of field extensions of
$F$ to the category of pointed sets.

For example, if $G$ is a split orthogonal group of degree $n$ and $M=\zz/2$, then $H_{\et}^1(F,G)$ classifies the isomorphism classes
of regular quadratic forms of dimension $n$ over $F$, $H_{\et}^0(F,M)=\zz/2$, $H_{\et}^1(F,M)=F^\times/F^{\times 2}$ by Hilbert 90,
$H_{\et}^2(F,M)={}_2\Br(F)=\{x\in\Br(F)\mid 2x=0\}$, and the invariants $H_{\et}^1(F,G)\to H_{\et}^n(F,M)$ are given for $n=0,1,2$ resp. by the dimension mod $2$, by the discriminant,
and by the Clifford invariant of the respective quadratic form.

For $n\ge 3$ one can define cohomological invariants for quadratic forms, for which the previous cohomological
invariants (of degree strictly less than $n$) are trivial. Namely, such forms lie in the $n$-th power of the fundamental
ideal $I$ in the Witt ring of $F$, and there are invariants $e_n\colon I^n\to H_{\et}^n(F,\zz/2)$ for all $n$ such that for
$q\in I^n$ the invariant $e_n(q)=0$ iff $q\in I^{n+1}$, $n\ge 0$. Moreover, if $q$ and $q'$ are two quadratic forms,
then one can test whether $q\simeq q'$ looking at $e_n$ of the difference $q-q'$ starting from $n=0$. If $e_n(q-q')\ne 0$,
then $q$ and $q'$ are not isomorphic. Otherwise, $q-q'\in I^{n+1}$ and one proceeds to $e_{n+1}$.
Since by the Arason-Pfister Hauptsatz $\cap_{n\ge 0}I^n=0$ (see \cite[Hauptsatz~5.1, Ch.~X]{Lam}),
the invariants $e_n$ allow us to check an isomorphism between two quadratic forms.

Moreover, under some conditions it is possible to reconstruct the original quadratic form from its invariants.
Namely, start for simplicity with an even dimensional regular quadratic form $q$ over $F$.
Since $H_{\et}^n(F,\zz/2)\simeq I^n/I^{n+1}$ (see \cite{Lam}), we can choose representatives of
elements in $H_{\et}^n(F,\zz/2)$ as linear combinations of $n$-fold Pfister forms.
Computing $e_1(q)\in H_{\et}^1(F,\zz/2)$ we can modify $q$ by $e_1(q)$, which is a linear combination of $1$-fold Pfister forms. Since $e_1(q-e_1(q))=0$,
the form $q-e_1(q)\in I^2$ and we can proceed to the next invariant $e_2$.
If this process stops, i.e. if starting from some point we will get zeroes (e.g. if the base field has finite cohomological dimension), then saving the representatives
of the invariants $e_1(q)$, $e_2(q-e_1(q))$ and so on, will allow us to restore the original form $q$.

The same philosophy can be applied to other algebraic groups. For example, if $G$ is a simple simply connected
algebraic group, then there is an invariant $$H_{\et}^1(-,G)\to H_{\et}^3(-,\qq/\zz(2))$$ of degree $3$, called the {\it Rost invariant}
(see \cite{GMS03}).
If $G$ is the spinor group, this invariant is called the {\it Arason invariant}.

The Rost invariant can be constructed as follows. Let $G$ be a simple simply-connected algebraic group of inner type and $Y$ be
a $G$-torsor. Then there is a long exact sequence (see \cite[Section~9]{GMS03})
\begin{multline}\label{rostseq}
0\to A^1(Y,K_2)\to A^1(Y_{\sep},K_2)^\Gamma\\
\xrightarrow{g}\Ker\big(H^3_{\et}(F,\qq/\zz(2))\to H^3_{\et}(F(Y),\qq/\zz(2))\big)\to\CH^2(Y)
\end{multline}
where the $K$-cohomology group $A^1(-,K_2)$ is defined in \cite[Section~4]{GMS03}, $\Gamma$ is the absolute Galois group,
and $Y_{\sep}=Y\times_F F_{\sep}$.
Moreover, $A^1(Y_{\sep},K_2)^\Gamma=\zz$ and $\CH^2(Y)=0$. The Rost invariant of $Y$ is the image of $1\in A^1(Y_{\sep},K_2)$
under the homomorphism $g$. We remark that sequence~\eqref{rostseq} for the Rost invariant is analogous to the
sequence~\eqref{hochsch} for the Tits algebras arising from the Hochschild--Serre spectral sequence.

We remark also that if $G$ is a group of inner type with trivial Tits algebras (simply-connected or not), then there is a well-defined Rost invariant
of $G$ itself (not of $G$-torsors); see \cite[Section~2]{GP07}.

The idea to use cohomological invariant to study linear algebraic groups and torsors is due to Jean-Pierre Serre.
For example, the Serre--Rost conjecture for groups of type $\F_4$ says that the map
$$H_{\et}^1(F,\F_4)\hookrightarrow H_{\et}^3(F,\zz/2)\oplus H_{\et}^3(F,\zz/3)\oplus H_{\et}^5(F,\zz/2)$$
induced by the invariants $f_3$, $g_3$ and $f_5$ described in \cite[\S40]{Inv} ($f_3$ and $g_3$ are the modulo $2$ and modulo $3$
components of the Rost invariant), is injective. This allows to exchange the study of the {\it set}
$H_{\et}^1(F,\F_4)$ of isomorphism classes of groups of type $\F_4$ over $F$ (equiv. of isomorphism
classes of $\F_4$-torsors or of isomorphism classes of Albert algebras)
by the {\it abelian group} $H_{\et}^3(F,\zz/2)\oplus H_{\et}^3(F,\zz/3)\oplus H_{\et}^5(F,\zz/2)$.

In the same spirit one can formulate the Serre conjecture II, saying in particular that $H_{\et}^1(F,\E_8)=1$ if
the field $F$ has cohomological dimension $2$. Namely, for such fields $H_{\et}^n(F,M)=0$ for all $n\ge 3$ and all torsion modules
$M$. In particular, for groups over $F$ there are no invariants of degree $\ge 3$, and the Serre conjecture II predicts
that the groups of type $\E_8$ over $F$ themselves are split.

Nowadays there exist a number of techniques to construct and study cohomological invariants. In the literature
one can find constructions using $K^0$, Chow rings, motivic cohomology. Moreover, the algebraic cobordism,
general oriented cohomology theories, classifying spaces of algebraic groups and even motives are useful.
For example, in the next section we will describe a relation between invariants and the Morava $K$-theories.

\section{Morava $K$-theory}\label{morava}

In this section we will introduce a geometric cohomology theory --- the Morava $K$-theory, and prove that it detects the triviality of some
cohomological invariants (in particular, of the Rost invariant) of algebraic groups.

Consider the algebraic cobordism $\Omega$ of Levine--Morel (see \cite{LM}).
By \cite[Thm.~1.2.6]{LM} the algebraic cobordism is a universal oriented cohomology theory and there is a
(unique) morphism of theories $\Omega^*\to A^*$ for any oriented cohomology theory $A^*$ in the sense of Levine--Morel.

Each oriented cohomology theory $A$ is equipped with a $1$-dimensional commutative formal group law $\FGL_A$. E.g.,
for the Chow theory $\CH^*$ this is the additive formal group law, for $K^0$ the multiplicative formal group law and for $\Omega$ the universal
formal group law. Moreover, these theories are universal for the respective formal group laws.

For a theory $A^*$ we consider the category of $A^*$-motives with coefficients in a commutative ring $R$,
which is defined in the same way as the category of Grothendieck's Chow motives with $\CH^*$
replaced by $A^*\otimes_\zz R$ (see \cite{Ma68}, \cite{EKM}).
In the present section the ring $R$ is $\zz$, $\qq$, or $\zz_{(p)}$ for a prime number $p$.

For a prime number $p$ and a natural number $n$ we consider the $n$-th Morava $K$-theory $K(n)$ with respect to $p$.
Note that we do not include $p$ in the notation.
We define this theory as the universal oriented cohomology theory for the Lubin--Tate formal group law of height $n$
with the coefficient ring $\zz_{(p)}[v_n,v_n^{-1}]$ (see below for the definition of the Lubin--Tate formal group law).

For a variety $X$ over $F$ one has $$K(n)(X)=\Omega(X)\otimes_\laz\zz_{(p)}[v_n,v_n^{-1}],$$
and $v_n$ is a $\nu_n$-element in the Lazard ring $\mathbb{L}$, i.e., an element whose {\it Milnor number}
is not divisible by $p^2$ (see \cite[Section~2]{Sem13} and \cite[Section~4.4.4]{LM}). The degree of $v_n$
is negative and equals $-(p^n-1)$.
In particular, $K(n)(\Spec F)=\zz_{(p)}[v_n,v_n^{-1}]$. We remark that usually one considers the Morava
$K$-theory with the coefficient ring $\ff_p[v_n,v_n^{-1}]$. For any prime $p$ we define $K(0)$ as $\CH\otimes\qq$.

If $n=1$ and $p=2$, one has $K(1)(X)=K^0(X)[v_1,v_1^{-1}]\otimes\zz_{(2)}$, since the Lubin--Tate formal
group law is isomorphic to the multiplicative formal group law in this case.

We describe now the formal group law for the $n$-th Morava $K$-theory modulo $p$ (the Lubin--Tate formal
group law) following \cite{Haz} and \cite{Rav}.
The logarithm of the formal group law of the {\it Brown--Peterson cohomology} equals
$$l(t)=\sum_{i\ge 0}m_it^{p^i},$$
where $m_0=1$ and the remaining variables $m_i$ are related to $v_j$ as follows:
$$m_j=\frac 1p\cdot\big(v_j+\sum_{i=1}^{j-1}m_iv_{j-i}^{p^i}\big).$$
Let $e(t)$ be the compositional inverse of $l(t)$.
The Brown--Peterson formal group law is given by $e(l(x)+l(y))$.

The $n$-th Morava formal group law is obtained from the $BP$ formal group law by sending all $v_j$ with $j\ne n$
to zero. Modulo the ideal $J$ generated by $p, x^{p^n}, y^{p^n}$ the formal group law for the $n$-th Morava
$K$-theory equals
$$\FGL_{K(n)}(x,y)=x+y-v_n\sum_{i=1}^{p-1}\frac 1p\binom pi x^{ip^{n-1}}y^{(p-i)p^{n-1}}\mod J.$$

\begin{dfn}
For an oriented cohomology theory $A$ and a motive $M$ in the category of $A$-motives over $F$ we say that
$M$ is {\it split}, if it is a finite direct sum
of (twisted) Tate motives over $F$. Note that this property depends on the theory $A$. E.g., there exist
smooth projective varieties whose motives are split for some oriented cohomology theories, but not for all
oriented cohomology theories.
\end{dfn}

\begin{ntt}[Euler characteristic]\label{eulerc}
The Euler characteristic of a smooth projective irreducible variety $X$ with respect to an oriented cohomology
theory $A^*$ is defined as the push-forward
$$\pi_*^A(1_X)\in A^*(\Spec F)$$ of the structural morphism $\pi\colon X\to\Spec F$.
E.g., for $A=K^0[v_1,v_1^{-1}]$
the Euler characteristic of $X$ equals $$v_1^{\dim X}\cdot\sum(-1)^i\dim H^i(X,\mathcal{O}_X)$$
see \cite[Ch.~15]{Ful}. If $X$ is geometrically irreducible and geometrically cellular, then this element equals $v_1^{\dim X}$
(see \cite[Example~3.6]{Za10}).

For the Morava $K$-theory $K(n)$ and a smooth projective irreducible variety $X$ of dimension $d=p^n-1$
the Euler characteristic modulo $p$ equals the element $v_n\cdot u\cdot s_d$ for some $u\in\zz_{(p)}^\times$,
where $s_d$ is the Milnor number of $X$ (see \cite[Sec.~4.4.4]{LM}). If $\dim X$ is not divisible by $p^n-1$, then the Euler characteristic of $X$ equals zero modulo $p$
(see \cite[Prop.~4.4.22]{LM}).
\end{ntt}

\begin{ntt}[Rost nilpotence for oriented cohomology theories]\label{rostnil}
Let $A$ be an oriented cohomology theory and consider the category of $A$-motives over $F$.
Let $M$ be an $A$-motive over $F$. We say that the Rost nilpotence principle holds for $M$,
if the kernel of the restriction homomorphism $$\End(M)\to\End(M_E)$$ consists of nilpotent correspondences for all
field extensions $E/F$.

Usually Rost nilpotency is formulated for Chow motives. By \cite[Sec.~8]{CGM05} it holds for
all twisted flag varieties. Note that the proof of \cite{CGM05} works for $A$-motives of twisted flag varieties
for all oriented cohomology theories $A$ in the sense of Levine--Morel satisfying the localization property.

Rost nilpotency is a tool which allows to descent motivic decompositions over $E$ to motivic decompositions over
the base field $F$. E.g., assume that Rost nilpotency holds for $M$ and that we are given a decomposition
$M_E\simeq\oplus{M_i}$ over $E$ into a finite direct sum. The motives $M$ and $M_i$ are defined as a pair $(X,\rho)$ and $(X_E,\rho_i)$,
where $X$ is a smooth projective variety over $F$, $\rho\in A(X\times X)$ and $\rho_i\in A(X_E\times X_E)$ are some projectors. If we assume further that all $\rho_i$ are defined over $F$,
then $M\simeq\oplus N_i$ for some motives $N_i$ over $F$, and the scalar extension $(N_i)_E$ is isomorphic to $M_i$
for every $i$ (see \cite[Section~8]{CGM05}, \cite[Section~2]{PSZ08}).
\end{ntt}

Let $R_m$ denote the (generalized) {\it Rost motive} of a non-zero pure symbol $\alpha\in H_{\et}^m(F,\mu_p^{\otimes m})$ in the category of Chow motives
with $\zz_{(p)}$-coefficients. By definition $R_m$ is indecomposable and for all field extensions $K/F$ the following
conditions are equivalent:
\begin{enumerate}
\item $(R_m)_K$ is decomposable;
\item $(R_m)_K\simeq\bigoplus_{i=0}^{p-1}\zz_{(p)}(b\cdot i)$ with $b=\frac{p^{m-1}-1}{p-1}$;
\item $\alpha_K=0\in H_{\et}^m(K,\mu_p^{\otimes m})$.
\end{enumerate}
The fields $K$ from this definition are called splitting fields of $R_m$.

The Rost motives were constructed by Rost and Voevodsky (see \cite{Ro07}, \cite{Vo11}). Namely, for all pure symbols $\alpha$
there exists a smooth projective $\nu_{m-1}$-variety $X$ (depending on $\alpha$) over $F$ such that the Chow motive
of $X$ has a direct summand isomorphic to $R_m$ and for every field extension $K/F$ the motive $(R_m)_K$
is decomposable iff $X_K$ has a $0$-cycle of degree coprime to $p$. Besides that, it follows from
Brosnan's extension \cite[Thm.~3.1]{Br03} of a result \cite[Prop.~1]{Ro98} of Rost
that Rost nilpotency holds for $R_m$. The variety $X$ is called a {\it norm variety} of $\alpha$.

E.g., if $p=2$ and $\alpha=(a_1)\cup\ldots\cup(a_m)$ with $a_i\in F^\times$, then one can take for $X$
the projective quadric given by the equation $\langle\!\langle a_1,\ldots,a_{m-1}\rangle\!\rangle\perp\langle -a_m\rangle=0$,
where $\langle\!\langle a_1,\ldots,a_{m-1}\rangle\!\rangle$ denotes the Pfister form.
(We use the standard notation from the quadratic form theory as in \cite{Inv} and \cite{EKM}.)

By \cite[Sec.~2]{ViYa07} there is a unique lift of the
Rost motive $R_m$ to the category of $\Omega$-motives and, since $\Omega$ is the universal oriented cohomology theory,
there is a well-defined Rost motive in the category of $A^*$-motives for any oriented cohomology theory $A^*$.
We will denote this $A$-motive by the same letter $R_m$. By $\ta(l)$, $l\ge 0$, we denote the Tate motives in the
category of $A$-motives. If $A=\CH\otimes\zz_{(p)}$, we keep the usual notation $\ta(l)=\zz_{(p)}(l)$.

\begin{prop}\label{pro62}
Let $p$ be a prime number, $n$ and $m$ be natural numbers and $b=\frac{p^{m-1}-1}{p-1}$.
For a non-zero pure symbol $\alpha\in H_{\et}^m(F,\mu_p^{\otimes m})$ consider the respective Rost motive $R_m$. Then
\begin{enumerate}
\item If $n<m-1$, then the $K(n)$-motive $R_m$ is a sum of $p$ Tate motives
$\oplus_{i=0}^{p-1}\ta(b\cdot i)$.
\item If $n=m-1$, then the $K(n)$-motive $R_m$ is a sum of the Tate motive $\ta$
and an indecomposable motive $L$ such that
$$K(n)(L)\simeq(\zz^{\oplus(p-1)}\oplus(\zz/p)^{\oplus{(m-2)(p-1)}})\otimes\zz_{(p)}[v_n,v_n^{-1}].$$
For a field extension $K/F$ the motive $L_K$ is isomorphic to a direct sum of twisted Tate motives
iff it is decomposable and iff the symbol $\alpha_K=0$.
\item If $n>m-1$, then the $K(n)$-motive $R_m$ is indecomposable and its realization is isomorphic to the group
$\CH(R_m)\otimes\zz_{(p)}[v_n,v_n^{-1}]$.
For a field extension $K/F$ the motive $(R_m)_K$ is decomposable iff $\alpha_K=0$.
In this case $(R_m)_K$ is a sum of $p$ Tate motives.
\end{enumerate}
\end{prop}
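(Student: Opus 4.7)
The plan is to treat the three cases separately, combining three inputs throughout: (i) the Chow splitting $(R_m)_K\simeq\bigoplus_{i=0}^{p-1}\zz_{(p)}(bi)$ over a splitting field $K/F$ of $\alpha$, which by the Vishik--Yagita lift of $R_m$ to an $\Omega$-motive base-changes to a $K(n)$-splitting $\bigoplus_{i=0}^{p-1}\ta(bi)$; (ii) Rost nilpotence for $R_m$ in the $K(n)$-motive category (paragraph~\ref{rostnil}); and (iii) the Milnor-number Euler-characteristic formula of paragraph~\ref{eulerc}.

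In case $n>m-1$ I would deduce $K(n)$-indecomposability from Chow-indecomposability. The key observation is that the generator of the non-Tate part of $\CH(R_m)$ over a splitting field sits in codimension $b=(p^{m-1}-1)/(p-1)$, and $b<p^{n-1}$ because $n\geq m$; since the Lubin--Tate corrections in the formal group law modulo $p$ enter only through monomials of total degree $\geq p^{n-1}$, they act trivially on classes pulled back from $R_m$. This yields $K(n)(R_m)\simeq\CH(R_m)\otimes\zz_{(p)}[v_n,v_n^{-1}]$. Applied to $R_m\times R_m$ and combined with Rost nilpotence, this gives $\End_{K(n)}(R_m)\simeq\End_{\CH}(R_m)\otimes\zz_{(p)}[v_n,v_n^{-1}]$, which has no non-trivial idempotents. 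The realization and the splitting criterion over $K/F$ follow.

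In case $n=m-1$ we have $\dim R_m=p^n-1$, so paragraph~\ref{eulerc} gives that the $K(n)$-Euler characteristic of $R_m$ equals $v_n\cdot u\cdot s_{p^n-1}(X)$ with $u\in\zz_{(p)}^\times$ and $s_{p^n-1}(X)/p\in\zz_{(p)}^\times$ by the $\nu_{m-1}$-property of $X$. The induced non-degenerate pairing between the rational point class and the fundamental class yields an idempotent cutting off a Tate summand $\ta$ from $R_m$. The complement $L$ is indecomposable, since any further decomposition would, after base change to $K$ and descent via Rost nilpotence, produce a decomposition of $R_m$ over $F$, contradicting the definition of the Rost motive. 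The realization $K(n)(L)$ is then read off from Rost's known computation of $\CH^*(R_m)$ by removing the Tate contribution; the torsion summand $(\zz/p)^{\oplus(m-2)(p-1)}$ comes from the surviving Chow torsion tensored with $\zz_{(p)}[v_n,v_n^{-1}]$.

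In case $n<m-1$ one must produce $p$ orthogonal Tate summands not present in Chow theory. I would construct $p$ orthogonal idempotents in $K(n)(R_m\times R_m)$ by lifting the split-field idempotents from $\Fsep$. The obstructions to lifting are governed by the Milnor operations $Q_k$ for $k\leq n$ acting on the relevant classes; these annihilate the supporters of the non-trivial Chow structure because the symbol $\alpha$ has weight $m>n+1$, so the obstructions vanish and the splitting descends to $F$. This case is the main obstacle of the proof: unlike the other two, one must genuinely exploit the height inequality $n<m-1$ to create decompositions invisible in $\CH$, and the crux is a careful verification of the vanishing of the Milnor-operation obstructions governing descent of the idempotents.
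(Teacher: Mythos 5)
Your proposal shares some structural inputs with the paper's proof — the $\nu_{m-1}$ Euler-characteristic argument for splitting off $\ta$ in case $n=m-1$, and Rost nilpotence throughout — but it deviates in a way that leaves two genuine gaps and replaces the paper's main input by vaguer heuristics.

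First, your indecomposability argument for $L$ in case $n=m-1$ is circular: you argue that a decomposition of $L$ would ``produce a decomposition of $R_m$ over $F$, contradicting the definition of the Rost motive.'' But the $K(n)$-motive $R_m$ \emph{is} already decomposed over $F$ (as $\ta\oplus L$) precisely by the Euler-characteristic construction, and this does not contradict anything, because the indecomposability built into the definition of $R_m$ is a statement about \emph{Chow} motives, not $K(n)$-motives. Over a splitting field, $L$ itself splits into $p-1\geq 1$ Tate twists, so one must genuinely argue that none of those idempotents are rational. The paper handles this by showing (via the explicit description of $K(n)^{p^n-1}(R_m\times R_m)$ from \cite[Thm.~4.4.7]{LM}, the product formula for $\Omega(R_m\times R_m)$, and the injectivity of the $BP$-restriction from \cite[Prop.~11.11]{Ya12}) that there are at most three rational projectors --- the diagonal, the Euler-characteristic projector, and their difference --- and then invokes Rost nilpotence. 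Your proposal has nothing that plays this role.

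Second, you never use the central computational input of the paper's proof: Yagita's result that $BP(R_m)\simeq BP(\Spec F)\oplus I(p,m-2)^{\oplus(p-1)}$ with injective restriction to $\overline R_m$. In case $n<m-1$ this gives the splitting almost immediately (since $v_n\in I(p,m-2)$ and $v_n$ is invertible in $K(n)$, rationality of all elements of $K(n)(\overline R_m)$ follows by tensoring, and similarly for $R_m\times R_m$). Your replacement --- an appeal to vanishing of Milnor-operation obstructions $Q_k$ for $k\leq n$ --- is not worked out, and it is far from clear that it gives a complete argument; it is not the obstruction theory the paper invokes. In case $n>m-1$ you observe that the Lubin--Tate corrections to the formal group law modulo $p$ start in degree $p^{n-1}>b$, and conclude $K(n)(R_m)\simeq\CH(R_m)\otimes\zz_{(p)}[v_n,v_n^{-1}]$; this is heuristically suggestive but not a proof, since the passage $BP\to K(n)$ is a quotient-and-localization, not merely a change in the formal group law, and the degree bound for $R_m\times R_m$ is tight when $n=m$. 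The paper instead derives the realization formula and indecomposability directly from the $BP$ computation, writing $\CH=BP/D\cdot BP$ and $K(n)$ as a further quotient-localization, and comparing the two using the injectivity of $\res$. Your approach would need substantially more work to reach the same conclusion.
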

\begin{proof}
Denote by $\overline R_m$ the scalar extension of $R_m$ to its splitting field.
By \cite[Prop.~11.11]{Ya12} (cf. \cite[Thm.~3.5, Prop.~4.4]{ViYa07}) the restriction map for the $BP$-theory
\begin{equation}\label{eq1}
\res\colon BP(R_m)\to BP(\overline R_m)=BP(\Spec F)^{\oplus p}
\end{equation}
is injective, and the image equals 
\begin{equation}\label{eq2}
BP(R_m)\simeq BP(\Spec F)\oplus I(p,m-2)^{\oplus (p-1)},
\end{equation}
where $I(p,m-2)$ is the ideal in the ring $BP(\Spec F)=\zz_{(p)}[v_1,v_2,\ldots]$ generated by the elements
$\{p,v_1,\ldots, v_{m-2}\}$.

(1) Assume first that $n<m-1$.
Since the ideal $I(p,m-2)$ contains $v_n$ for $n<m-1$ and $v_n$ is invertible in $K(n)(\Spec F)$, we immediately get that all
elements in $K(n)(\overline R_m)$ are rational, i.e., are defined over the base field. By the properties of the
Rost motives
\begin{equation}\label{formom}
\Omega^l(R_m\times R_m)=\bigoplus_{i+j=l}\Omega^i(R_m)\otimes\Omega^j(\overline R_m)
\end{equation}
for all $l$. Since $\Omega$ is a universal theory, the same formula holds for $BP$ and for $K(n)$.
Therefore all elements in $K(n)(\overline R_m\times \overline R_m)$ are rational, and hence by Rost nilpotency
for $R_m$ this gives the first statement of the proposition.

(2) Assume now that $n=m-1$.
Let $X$ be a norm variety for the symbol $\alpha$. In particular, $\dim X=p^{m-1}-1=p^n-1$.
Since the Morava--Euler characteristic of $X$ equals $u\cdot v_n$ for some
$u\in\zz_{(p)}^\times$ (see Section~\ref{eulerc}), the element $v_n^{-1}\cdot u^{-1}(1\times 1)\in K(n)(X\times X)$
is a projector defining the Tate motive $\ta$. Thus, we get the decomposition $R_m\simeq\ta\oplus L$ for some motive $L$.
We claim that $L$ is indecomposable.

Indeed, by \cite[Thm.~4.4.7]{LM} the elements of
$K(n)^{p^n-1}(R_m\times R_m)$ are linear combinations of elements of the form $v_n^s\cdot [Y\to X\times X]$,
where $Y$ is a resolution of singularities of a closed subvariety of $X\times X$, and $-s(p^n-1)+\codim Y=p^n-1$.
In particular, $s=0,1,-1$ and $\codim Y=0,\,p^n-1,\,2(p^n-1)$.

By formula~\eqref{formom} and by the injectivity of the restriction map for $BP$,
it follows that there are at most three rational projectors in $K(n)(\overline R_m\times\overline R_m)$
(cf. \cite[Section~6 and Proof of Lemma~6.12]{Nes14}).
These are the diagonal, the projector $v_n^{-1}\cdot u^{-1}(1\times 1)$ constructed above and their difference
(which defines the motive $L$).
Therefore by Rost nilpotency the motive $L$ is indecomposable over $F$.

Taking the tensor product $-\otimes_{BP(\Spec F)} K(n)(\Spec F)$ with formula~\eqref{eq1} and
using \eqref{eq2} one immediatelly gets the formula for $K(n)(L)$.

(3) The same arguments show that $R_m$ is indecomposable for the Morava $K$-theory
$K(n)$ for $n>m-1$. Notice that $\CH=BP/D\cdot BP$, where $D$ is the ideal in $BP(\Spec F)$ generated
by $v_1,v_2,\ldots$. Since $\res$ in formula~\eqref{eq1} is injective, we get that
$$\CH(R_m)=\mathrm{Im}(\res)/D\cdot\mathrm{Im}(\res).$$ Similary we obtain a formula for the Morava $K$-theory $K(n)$,
which is obtained from $BP$ by sending all $v_i$ with $i\ne n$ to $0$ and localizing in $v_n$. This formula
is the same as for $\CH\otimes\zz_{(p)}[v_n,v_n^{-1}]$ (cf. \cite[Example~6.14]{Nes14}).
\end{proof}

\begin{rem}
This proposition demonstrates a difference between $K^0$ and the Morava $K(n)$-theory, when $n>1$. By \cite{Pa94} $K^0$
of all twisted flag varieties is $\zz$-torsion-free. This is not the case for $K(n)$, $n>1$.

Moreover, the same arguments as in the proof of the proposition
show that the connective $K$-theory $CK(1)$ (see \cite{Cai08}) of Rost motives $R_m$ for $m>2$ contains non-trivial $\zz$-torsion.
\end{rem}

\begin{rem}
The Chow groups of the Rost motives are known; see \cite[Thm.~8.1]{KM02}, \cite[Thm.~RM.10]{KM13}, \cite[Cor.~10.8]{Ya12},
\cite[Section~4.1]{Vi07}.
\end{rem}

The proof of the following proposition is close to \cite[Section~8]{A12}.

\begin{prop}\label{moravaaffine2}
Let $A$ be an oriented
cohomology theory in the sense of Levine--Morel satisfying the localization property.
Let $Z$ be a smooth variety over a field $F$.
Assume that there exists a smooth projective variety $Y$ with invertible Euler characteristic
with respect to $A$ and such that for every point $y\in Y$ (not necessarily closed)
the natural pullback $$A(F(y))\to A(Z_{F(y)})$$ is an isomorphism

Then the pullback of the structural morphism $Z\xrightarrow{\pi}\Spec F$
induces an isomorphism $$A(Z)=A(F).$$
\end{prop}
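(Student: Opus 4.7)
The plan is to reduce to showing that pullback along the first projection $p_1 \colon Y \times Z \to Y$ induces an isomorphism $p_1^* \colon A(Y) \to A(Y \times Z)$, and then to deduce the proposition using the invertibility of the Euler characteristic of $Y$ as a splitting device.

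For the deduction, consider the Cartesian square formed by $p_1$, the second projection $p_2 \colon Y \times Z \to Z$, and the structural morphisms $\pi_Y$ and $\pi_Z$. Since $Y$ is projective, so is $p_2$, and base change for $A$ gives $p_{2*} \circ p_1^* = \pi_Z^* \circ \pi_{Y*}$ together with $p_{2*}(1_{Y \times Z}) = \pi_Z^*(e)$, where $e := \pi_{Y*}(1_Y) \in A(F)^\times$. Granting that $p_1^*$ is an isomorphism, for any $\alpha \in A(Z)$ one writes $p_2^*(\alpha) = p_1^*(\gamma)$ and applies $p_{2*}$; the projection formula yields $\alpha \cdot \pi_Z^*(e) = \pi_Z^*(\pi_{Y*}(\gamma))$, whence $\alpha = \pi_Z^*(\pi_{Y*}(\gamma) \cdot e^{-1})$, so $\pi_Z^*$ is surjective. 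Injectivity is analogous: $\pi_Z^*(\beta) = 0$ forces $p_1^*\pi_Y^*(\beta) = 0$, hence $\pi_Y^*(\beta) = 0$, and applying $\pi_{Y*}$ gives $\beta \cdot e = 0$, so $\beta = 0$.

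The bulk of the work is showing that $p_1^* \colon A(Y) \to A(Y \times Z)$ is an isomorphism. For this, I would run Noetherian induction on closed subvarieties of $Y$, aiming to prove that $A(T) \to A(T \times Z)$ is an isomorphism for every closed $T \subset Y$, with $T = Y$ yielding the conclusion. Given $T$, choose a smooth dense open $U \subset T$ with closed complement $T' \subsetneq T$ (via generic smoothness in characteristic zero), and compare the localization sequences for $(T', T)$ and $(T' \times Z, T \times Z)$ linked by $p_1^*$. By the induction hypothesis applied to $T'$ (and to a stratification of $T'$ if $T'$ is reducible or singular), the vertical map coming from $T'$ is an isomorphism, so by the five-lemma it remains to verify that $A(U) \to A(U \times Z)$ is an isomorphism.

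This last step is handled by a continuity argument: taking the filtered colimit over all non-empty open subsets $U_\alpha \subset U$, one has $\varinjlim_\alpha A(U_\alpha) = A(F(\eta))$ and $\varinjlim_\alpha A(U_\alpha \times Z) = A(Z_{F(\eta)})$, where $\eta$ is the generic point of $T$, and the colimit of the pullback maps is precisely $A(F(\eta)) \to A(Z_{F(\eta)})$, an isomorphism by the pointwise hypothesis. Combined with a further Noetherian induction on closed complements inside $U$ and one more application of the localization sequence and five-lemma, this yields the required isomorphism on $U$ and closes the induction. The main obstacle is the bookkeeping of the Noetherian induction for a general oriented cohomology theory, in particular extending $A$ to singular strata compatibly with the localization sequence and checking that the relevant filtered colimits commute with $A$; once one has Levine--Morel localization in the appropriate form, the argument is otherwise a standard five-lemma chase.
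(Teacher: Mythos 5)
Your overall strategy -- control $A(Y\times Z)\to A(Y)$ via Noetherian induction on closed subsets of $Y$ and a localization/colimit argument reducing to the pointwise hypothesis, then transfer information to $\pi_Z^*$ via the projections and the invertible Euler characteristic -- is exactly the paper's. However, there is a genuine gap in what you claim the localization sequence can deliver.

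For an oriented cohomology theory in the sense of Levine--Morel, the localization axiom gives only a \emph{right-exact} sequence: for a closed $T'\subset T$ with open complement $U$ one has
$$A(T')\longrightarrow A(T)\longrightarrow A(U)\longrightarrow 0,$$
with no control on the kernel of the first map. A map of such right-exact sequences, with the outer vertical arrows isomorphisms (or even just surjections), yields only \emph{surjectivity} of the middle vertical map, not injectivity -- the five-lemma needs exactness one step further to the left, which you do not have. Consequently, your Noetherian induction cannot establish that $p_1^*\colon A(Y)\to A(Y\times Z)$ is an isomorphism; it only establishes surjectivity. This matters because your second paragraph uses injectivity of $p_1^*$ precisely once, at the step ``$p_1^*\pi_Y^*(\beta)=0$, hence $\pi_Y^*(\beta)=0$'', and that step is unsupported. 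The surjectivity half of your deduction ($\alpha=\pi_Z^*(\pi_{Y*}(\gamma)\cdot e^{-1})$) is fine and only needs surjectivity of $p_1^*$.

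The paper avoids this by claiming only surjectivity of the analogue of $p_1^*$ (its map $b^*$), gets surjectivity of $\pi^*$ exactly as you do, and then closes the argument for injectivity with a separate remark (that $A(F)$ is a direct summand of $A(Z)$), rather than by trying to upgrade $b^*$ to an isomorphism. You should weaken your intermediate claim to surjectivity of $p_1^*$ -- which your localization/colimit argument does prove -- and supply a separate argument for injectivity of $\pi_Z^*$, since the route through injectivity of $p_1^*$ is not available with only a right-exact localization sequence.

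Two smaller remarks: you apply the localization sequence and pullbacks to (possibly singular) closed strata $T\subset Y$, so you are implicitly working with the Borel--Moore version $A_*$ rather than the cohomological $A^*$; this needs to be said, and it is the reason the sequence is only right-exact. And your ``further Noetherian induction inside $U$'' combined with the colimit identification $\varinjlim_\alpha A(U_\alpha)=A(F(\eta))$ again only yields surjectivity, for the same reason.
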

\begin{proof}
We have the following localization diagram
\[
 \xymatrix{
\lim\limits_{\substack{\longrightarrow\\ Y'\subset Y}}A(Y') \ar@{->}[r]\ar@{->}[d] & A(Y) \ar@{->}[r]\ar@{->}[d] & A(F(Y)) \ar@{->}[r] \ar@{->}[d]&0\\
\lim\limits_{\substack{\longrightarrow\\ Y'\subset Y}}A(Z\times Y') \ar@{->}[r] & A(Z\times Y) \ar@{->}[r] & A(Z_{F(Y)}) \ar@{->}[r] &0
 }
 \]
where the vertical arrows are pullbacks of the respective projections and the limits are taken over all closed
subvarieties of $Y$ of codimension $\ge 1$.

By the choice of $Y$
the right vertical arrow is an isomorphism. By induction on dimension of $Y$ the left vertical arrow is surjective.
It follows by a diagram chase that the middle vertical arrow is surjective as well.

Let $a\colon Y\to\Spec F$ be the structural morphism, $b\colon Z\times Y\to Y$ and $c\colon Z\times Y\to Z$
be the projections. Consider now another commutative diagram:
$$
\xymatrix{
A(F) \ar@{->}[dd]^{\simeq}\ar@{->}[rd]^-{a^*}\ar@{->}[rrr]^-{\pi^*}& & & A(Z) \ar@{->}[ld]^-{c^*} \ar@{->}[dd]^{\simeq}\\
& A(Y) \ar@{->}[r]^-{b^*}\ar@{->}[ld]^-{a_*} & A(Z\times Y)\ar@{->}[rd]^-{c_*} &\\
A(F)\ar@{->}[rrr]^-{\pi^*} & & & A(Z)
}
$$
By the above considerations the homomorphism $b^*$ is surjective. The left and the right vertical arrows
are isomorphisms, since they are multiplications by the $A$-Euler characteristic of $Y$ which is invertible.

Therefore by a diagram chase the bottom horizontal arrow is surjective. But $A(F)$ is a direct summand of
$A(Z)$. Therefore the bottom arrow is an isomorphism.
\end{proof}

Let now $(a_1)\cup\ldots\cup(a_m)\in H_{\et}^m(F,\zz/2)$ be a pure symbol, $a_i\in F^\times$.
The quadratic form $q=\langle\!\langle a_1,\ldots,a_{m-1}\rangle\!\rangle\perp\langle -a_m\rangle$
is called a {\it norm form} and the respective projective quadric given by $q=0$ is called a (projective)
{\it norm quadric}. The respective {\it affine norm quadric} is an open subvariety of the projective norm quadric
given by the equation $$\langle\!\langle a_1,\ldots,a_{m-1}\rangle\!\rangle=a_m,$$ i.e. setting the last coordinate to $1$.

\begin{cor}\label{moravaaffine}
Let $0\le n<m-1$ and set $p=2$. Consider the affine norm quadric $X^{\aff}$ of dimension $2^{m-1}-1$ corresponding
to a pure symbol in $H_{\et}^m(F,\zz/2)$. Then the pullback of the structural morphism $X^{\aff}\xrightarrow{\pi}\Spec F$
induces an isomorphism $$K(n)(X^{\aff})=K(n)(F).$$
\end{cor}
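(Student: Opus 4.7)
The plan is to apply Proposition~\ref{moravaaffine2} with $A = K(n)$, $Z = X^{\aff}$, and $Y$ taken to be the projective norm quadric associated to the same symbol $\alpha$ (so that $X^{\aff}$ is an affine open in $Y$). The corollary then follows once we verify: (i) $Y$ has invertible Morava-$K(n)$ Euler characteristic, and (ii) for every point $y \in Y$ the pullback $K(n)(F(y)) \to K(n)(X^{\aff}_{F(y)})$ is an isomorphism.

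Hypothesis (ii) is the main local computation. For any $y \in Y$ the quadric $Y_{F(y)}$ has a rational point, so the Pfister form $\langle\!\langle a_1,\ldots,a_{m-1}\rangle\!\rangle$ is isotropic and hence hyperbolic over $F(y)$, and in particular $\alpha_{F(y)} = 0$. After a linear change of variables diagonalizing the Pfister form into hyperbolic planes, the affine equation defining $X^{\aff}_{F(y)}$ rewrites as $\sum_{i=1}^{2^{m-2}} x_i y_i = a_m$, which realizes $X^{\aff}_{F(y)}$ as an $\mathbb{A}^{2^{m-2}-1}$-bundle over the punctured affine space $\mathbb{A}^{2^{m-2}}_{F(y)} \setminus \{0\}$. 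Applying homotopy invariance of $K(n)$ to the affine bundle, and then the localization sequence for the closed embedding $\{0\} \hookrightarrow \mathbb{A}^{2^{m-2}}$, one reduces to showing that the pushforward from the origin to $\mathbb{A}^{2^{m-2}}$ vanishes in $K(n)$. This pushforward is the top Chern class of the trivial rank-$2^{m-2}$ bundle, which is zero. The hypothesis $n < m-1$, equivalently $2^{m-2} \geq 2^n$, is precisely the range in which this Chern-class argument goes through cleanly, yielding $K(n)(X^{\aff}_{F(y)}) \simeq K(n)(F(y))$.

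Hypothesis (i) requires $\chi_{K(n)}(Y)$ to be a unit in $\zz_{(2)}[v_n, v_n^{-1}]$. Using Proposition~\ref{pro62}(1), the $K(n)$-motive of $Y$ decomposes into Tate summands (the Rost summand $R_m$ splits into two Tate motives, and the complementary part of $M(Y)$ is already Tate over a splitting field and remains Tate by Rost nilpotency), and $\chi_{K(n)}(Y)$ can be read off from this decomposition. If the direct choice $Y = $ projective norm quadric fails to give invertibility for $p=2$, one replaces $Y$ by a product $Y \times Y_0$ with an auxiliary projective variety whose Euler characteristic provides the necessary correction while preserving the generic-splitting property for $\alpha$; alternatively, one runs the argument along a generic splitting tower and inducts on the level.

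The principal obstacle is hypothesis (i): at the prime $2$, projective splitting varieties for $2$-symbols naturally have Euler characteristic divisible by $2$, which conflicts with invertibility in $\zz_{(2)}$. Reconciling the generic-splitting requirement with invertibility by a judicious choice of $Y$ is the technical heart of the argument, while the geometric content, namely the identification of the affine norm quadric with an iterated affine bundle over a punctured affine space after splitting, is what makes the Chern-class vanishing available in the range $n < m-1$.
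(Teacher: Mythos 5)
Your reduction to Proposition~\ref{moravaaffine2} is the paper's strategy, but your choice of $Y$ creates precisely the obstacle you name and then do not resolve. Taking $Y$ to be the projective norm quadric of the \emph{full} symbol $\alpha$ gives a variety of dimension $2^{m-1}-1$. Since $n<m-1$, this dimension is not $2^n-1$, and by \cite[Prop.~4.4.22]{LM} the $K(n)$-Euler characteristic of a smooth projective variety whose dimension is not a multiple of $p^n-1$ vanishes modulo $p$; so $\chi_{K(n)}(Y)$ is not a unit of $\zz_{(2)}[v_n,v_n^{-1}]$, and hypothesis (i) of Proposition~\ref{moravaaffine2} fails. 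Neither of the patches you sketch closes the gap: the Euler characteristic is multiplicative, so replacing $Y$ by $Y\times Y_0$ cannot turn a non-unit into a unit; and ``induct along a generic splitting tower'' is not developed to the point where one can see it works.

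The paper's fix is a different, sharper choice of $Y$: the projective norm quadric of the \emph{subsymbol} $(a_1)\cup\cdots\cup(a_{n+1})\in H^{n+1}_{\et}(F,\zz/2)$. This quadric has dimension $2^n-1$ and is a $\nu_n$-variety, so its $K(n)$-Euler characteristic is invertible; and for $y\in Y$ the subsymbol, hence $\langle\!\langle a_1,\ldots,a_{m-1}\rangle\!\rangle$, hence $\alpha$, splits over $F(y)$, so hypothesis (ii) still has a chance. That is where the assumption $n<m-1$ actually enters: it ensures a subsymbol of length $n+1\le m-1$ exists. Your claim that $n<m-1$ is ``precisely the range in which the Chern-class argument goes through'' for (ii) is not right: the vanishing of the pushforward from the origin of $\mathbb{A}^{2^{m-2}}$ holds in every oriented theory, since the normal bundle is trivial and $i^*$ is an isomorphism by homotopy invariance, with no dependence on $n$ at all. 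So (ii) is unconstrained; the constraint lives entirely in (i). For the local computation (ii) the paper instead argues via the Chow-motivic decomposition of the projective norm quadric and its Pfister-quadric complement, the localization sequence, and the Vishik--Yagita lifting to $\Omega$; your affine-bundle-plus-Chern-class route is a reasonable alternative for that step, but it does not rescue the choice of $Y$.
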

\begin{proof}
Let $\alpha:=(a_1)\cup\ldots\cup(a_m)\in H_{\et}^m(F,\zz/2)$ be our pure symbol, $a_i\in F^\times$,
$q$ the norm form for $\alpha$, and $Q$ the respective projective norm quadric given by $q=0$.
Let $Y$ be the projective norm quadric of dimension $2^n-1$ corresponding to the subsymbol
$$(a_1)\cup\ldots\cup(a_{n+1})\in H_{\et}^{n+1}(F,\zz/2).$$

We need to check the conditions of Proposition~\ref{moravaaffine2}.
By the choice of $Y$ it is a $\nu_n$-variety (see e.g. \cite[Section~2]{Sem13}). Therefore by \cite[Prop.~4.4.22]{LM}
its Morava--Euler characteristic is invertible.

Moreover, the quadratic form $q$ is split completely over $F(y)$ for any point $y$ of $Y$.
In particular, $X^{\aff}_{F(y)}$ is a split odd-dimensional affine quadric.
The complement $Q':=Q\setminus X^{\aff}$
is a projective Pfister quadric of dimension $2^{m-1}-2$, and both $Q$ and $Q'$ are
split over $F(y)$. Therefore the Chow motives of $Q$ and $Q'$ over $F(y)$ are direct sums of twisted
Tate motives. Moreover, $\CH(X^{\aff}_{F(y)})=\zz$ (see \cite[Theorem~A.4]{Ka01} for a more general result).

Denote by $\Mot(-)$ the Chow motive of a variety $-$.
The localization sequence for Chow groups implies that the induced map
of $\Mot(Q')\to\Mot(Q)$ over $F(y)$ is surjective in positive codimensions. By \cite[Section~2]{ViYa07}
the same holds for $\Omega$-motives of $Q'$ and $Q$ over $F(y)$. Therefore $\Omega(X^{\aff}_{F(y)})=\laz$,
and, thus, $K(n)(X^{\aff}_{F(y)})=K(n)(F(y))$. We are done.
\end{proof}

Let now $B$ be a central simple $F$-algebra of a prime degree $p$ and $c\in F^\times$.
Consider the Merkurjev--Suslin variety
$$\MS(B,c)=\{\alpha\in B\mid \Nrd(\alpha)=c\},$$
where $\Nrd$ stands for the reduced norm on $B$.

\begin{cor}
In the above notation the structural morphism induces an isomorphism $A(\MS(B,c))\simeq A(F)$, when $A$ is Grothendieck's $K^0$ or
the first Morava $K$-theory with respect to the prime $p$.
\end{cor}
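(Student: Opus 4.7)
The plan is to apply Proposition~\ref{moravaaffine2} with $Z=\MS(B,c)$, taking as the auxiliary smooth projective variety $Y=\SB(B)$, the Severi--Brauer variety of $B$, which has dimension $p-1$. Two conditions must be verified: that $\SB(B)$ has invertible $A$-Euler characteristic, and that the pullback $A(F(y))\to A(\MS(B,c)_{F(y)})$ is an isomorphism for every point $y\in\SB(B)$.

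For the Euler characteristic condition, $\SB(B)$ is a twisted form of $\mathbb{P}^{p-1}$, hence $\chi(\SB(B),\mathcal{O})=1$, which is invertible in $K^0(F)=\zz$. For $A=K(1)$ at the prime $p$, one has $\dim\SB(B)=p-1=p^1-1$, and since $B$ has prime degree $p$ the Milnor number of $\SB(B)$ equals that of $\mathbb{P}^{p-1}$, namely $p$, which is not divisible by $p^2$. Hence $\SB(B)$ is a $\nu_1$-variety, and by Section~\ref{eulerc} its $K(1)$-Euler characteristic equals $v_1 u$ for some $u\in\zz_{(p)}^\times$, which is invertible in $K(1)(\Spec F)=\zz_{(p)}[v_1,v_1^{-1}]$.

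For the pullback condition, any point $y\in\SB(B)$ provides an $F(y)$-rational point, so $B_{F(y)}\simeq M_p(F(y))$; under this identification $\Nrd$ becomes the determinant, so $\MS(B,c)_{F(y)}$ is identified with the closed subvariety $\{M:\det M=c\}$ of $\mathbb{A}^{p^2}_{F(y)}$. Translation by a fixed matrix of determinant $c$ yields an isomorphism $\MS(B,c)_{F(y)}\simeq\SL_{p,F(y)}$, and the problem reduces to showing $A(\SL_{p,E})=A(E)$ for every field extension $E/F$. For this I use the variety isomorphism
$$\SL_p\times\mathbb{G}_m\xrightarrow{\sim}\GL_p,\qquad(h,d)\mapsto h\cdot\mathrm{diag}(d,1,\ldots,1),$$
together with the presentation of $\GL_p$ as the complement of the principal hypersurface $\{\det=0\}$ inside $\mathbb{A}^{p^2}$. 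The localization exact sequence for $A$, combined with homotopy invariance $A(\mathbb{A}^{p^2})=A(E)$ and the triviality of the line bundle $\mathcal{O}(\{\det=0\})$ on $\mathbb{A}^{p^2}$, gives $A(\GL_p)=A(E)$. Since the projection $\SL_p\times\mathbb{G}_m\to\SL_p$ admits the section $h\mapsto(h,1)$, the map $A(\SL_p)\to A(\GL_p)=A(E)$ is a split injection onto a summand containing $A(E)$, forcing $A(\SL_p)=A(E)$.

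The main obstacle lies in this last step: rigorously running the Levine--Morel localization sequence for the theories $K^0$ and $K(1)$ and verifying that the pushforward $A(\{\det=0\})\to A(\mathbb{A}^{p^2})$ has trivial image, despite $\{\det=0\}$ being singular. For $K^0$ this is straightforward from the fact that pushforwards of coherent sheaves on $\mathbb{A}^{p^2}$ have locally free resolutions, so any class from the positive-codimension complement has zero rank; for $K(1)$ a graded-degree argument analogous to the Chow case suffices. Once this is established, the corollary is immediate from Proposition~\ref{moravaaffine2} applied to $Z=\MS(B,c)$ and $Y=\SB(B)$.
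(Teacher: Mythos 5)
Your proof is essentially the paper's: same auxiliary variety $Y=\SB(B)$, same reduction to $A(\SL_p)$ via $\GL_p\simeq\SL_p\times\mathbb{G}_m$ (you use a slightly different variety isomorphism and prove $\MS(B,c)_{F(y)}\simeq\SL_p$ by explicit translation rather than by citing triviality of $\SL_p$-torsors, but these are cosmetic variants). However, your closing paragraph identifies a ``main obstacle'' that in fact does not exist. The Levine--Morel localization sequence $\Omega(W)\to\Omega(\mathbb{A}^{p^2})\to\Omega(\GL_p)\to 0$ is right-exact for any closed $W$ (singular or not), so the surjection $\mathbb{L}=\Omega(\mathbb{A}^{p^2})\twoheadrightarrow\Omega(\GL_p)$ comes for free; one does not need to show the pushforward from $\{\det=0\}$ is zero. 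Combined with the section $\Spec E\to\GL_p$ given by the identity matrix, which makes $\mathbb{L}\to\Omega(\GL_p)$ a split injection, one gets $\Omega(\GL_p)=\mathbb{L}$ immediately, and the retraction you already set up onto $\SL_p$ then gives $\Omega(\SL_p)=\mathbb{L}$. Finally, the paper runs this entire calculation once at the level of $\Omega$ and then specializes to $A\in\{K^0,K(1)\}$ by base change along $\mathbb{L}\to A(\Spec F)$, which is cleaner than arguing separately in each theory as your last paragraph suggests: both $K^0$ and $K(1)$ are free theories obtained from $\Omega$ by change of coefficient ring, so $\Omega(X)=\mathbb{L}$ forces $A(X)=A(\Spec F)$ with no further work.
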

\begin{proof}
Let $Y=\SB(B)$ denote the Severi--Brauer variety of $B$. We need to check the conditions of Proposition~\ref{moravaaffine2}.
The varietry $Y$ is a geometrically cellular $\nu_1$-variety. Thus, its $A$-Euler characteristic is invertible.

Over a point $y\in Y$ the variety $\MS(B,c)$ is isomorphic to $\SL_p$, since $\MS(B,c)$ over $F(y)$
is an $\SL_p$-torsor over $F(y)$ and $H^1_{\et}(F(y),\SL_p)=1$. Since $\GL_p$ is an open subvariety in
$\mathbb{A}^{p^2}$, by the localization sequence $\Omega(\GL_p)=\laz$. Moreover, $\GL_p$ is isomorphic
as a variety (not as a group scheme) to $\SL_p\times\mathbb{G}_m$ with the isomorphism sending a matrix $\alpha$
to the pair $(\beta,\det\alpha)$ where $\beta$ is obtained from $\alpha$ by dividing its first row by $\det\alpha$.
The composite morphism $$\SL_p\hookrightarrow\GL_p\xrightarrow{\simeq}\SL_p\times \mathbb{G}_m\to\SL_p,$$ where the first morphism
is the natural embedding and the last morphism is the projection, is the identity. Taking pullbacks in this
sequence, one gets that $\Omega(\SL_p)=\laz$ and, hence,
$A(\SL_p)=A(F(y))$ for $A$ as in the statement of the present corollary. We are done.
\end{proof}

Let $J$ be an Albert algebra over $F$ (see \cite[Chapter~IX]{Inv}) and $N_J$ denote the cubic norm form on $J$.
For $d\in F^\times$ consider the variety $$Z=\{\alpha\in J\mid N_J(\alpha)=d\}.$$ The group $G$ of isometries
of $N_J$ is a group of type $^1{}\E_6$ and it acts on $Z$ geometrically transitively.

The proof of the following statement belongs to A.~Ananyevskiy.
\begin{cor}
In the above notation the natural map $K^0(Z)\to K^0(F)$ is an isomorphism.
\end{cor}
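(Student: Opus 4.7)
My plan is to apply Proposition~\ref{moravaaffine2} with $A=K^0$, in direct analogy with the proof of the preceding corollary on the Merkurjev--Suslin variety. As the auxiliary smooth projective variety $Y$ I would take the variety of Borel subgroups of $G$: it is smooth, projective, and geometrically cellular (it admits a Bruhat cell decomposition over any splitting field of $G$), so by the discussion in \ref{eulerc} its $K^0$-Euler characteristic is invertible.

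For every point $y\in Y$, the point $y$ itself supplies an $F(y)$-rational Borel subgroup of $G_{F(y)}$, so $G$ is split over $F(y)$. Since $G$ is the isometry group of the cubic form $N_J$, this forces the pair $(J,N_J)$ to become the split Albert algebra with its split cubic norm over $F(y)$. Consequently $Z_{F(y)}$ is identified with the split affine cubic hypersurface $\{N_J=d\}\subset J\cong\mathbb{A}^{27}$; since the split norm is surjective (e.g.\ the diagonal element $\mathrm{diag}(1,1,d)$ has norm $d$), $Z_{F(y)}$ carries a rational point and becomes the $G$-homogeneous space $G/\F_4$, where $\F_4=\Aut(J)$.

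It remains to show that $K^0(Z_{F(y)})=K^0(F(y))$ in this split situation; this is the direct analogue of $\Omega(\SL_p)=\laz$ in the previous corollary. Set $U:=J\setminus\{N_J=0\}\subset J\cong\mathbb{A}^{27}$. The localization sequence together with $\mathbb{A}^1$-homotopy invariance gives $K^0(U)=K^0(F(y))$ directly, since the class of the hypersurface $\{N_J=0\}$ already vanishes in $K^0$ of affine space. The scaling $\mathbb{G}_m$-action on $U$ intertwines $N_J\colon U\to\mathbb{G}_m$ with the cubing map $t\mapsto t^3$ on the target; pulling back along $\mathbb{G}_m\xrightarrow{t\mapsto t^3}\mathbb{G}_m$ produces a fibre product $W:=U\times_{\mathbb{G}_m}\mathbb{G}_m$ that is simultaneously a finite étale $\mu_3$-cover of $U$ and (via $(\alpha,t)\mapsto(\alpha/t,t)$, using the variant $\{N_J(\alpha)=dt^3\}$ to land in $Z_d$) isomorphic to $Z_{F(y)}\times\mathbb{G}_m$. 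Combining $K^0(\mathbb{G}_m)=K^0(F(y))$ with $\mu_3$-descent along $W\to U$ then extracts $K^0(Z_{F(y)})=K^0(F(y))$.

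The main obstacle is precisely this descent step. In the Merkurjev--Suslin case the splitting $\GL_p\cong\SL_p\times\mathbb{G}_m$ is exact because $\det$ is a multiplicative homomorphism admitting a section; the cubic nature of $N_J$ forbids any such section of $U\to\mathbb{G}_m$, and one is left with a genuine $\mu_3$-twist relating $U$ and $Z_{F(y)}\times\mathbb{G}_m$. Making the $\mu_3$-descent effective on integral $K^0$---by exhibiting the $\mu_3$-action as trivial on $K^0(Z_{F(y)})$, by an explicit transfer argument along the cube cover, or by replacing this step with a cell decomposition of a suitable smooth projective completion of $Z_{F(y)}$---is the technical heart of the proof.
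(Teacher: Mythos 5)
Your overall plan---apply Proposition~\ref{moravaaffine2} with $Y$ the variety of Borel subgroups of $G$, verify the invertibility of the $K^0$-Euler characteristic, and reduce to the split case $Z_{F(y)}\cong\E_6/\F_4$---matches the paper's setup exactly. You correctly identify the crux: showing $K^0(Z_{F(y)})\cong K^0(F(y))$. But you also correctly identify, in your own final paragraph, that your geometric route to this fact has a gap, and that gap is real.

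The paper does \emph{not} argue via the open complement $U=J\setminus\{N_J=0\}$ and the $\mu_3$-cover $W\to U$. It instead uses the identification $Z_{F(y)}\cong\E_6/\F_4$ together with a representation-theoretic input: by the branching rules the restriction $R(\E_6)\to R(\F_4)$ on representation rings is surjective, and then Merkurjev's spectral sequence (as in \cite[Section~6]{A12}) computes $K^0(\E_6/\F_4)\cong K^0(F(y))\otimes_{R(\E_6)}R(\F_4)\cong K^0(F(y))$. This is the genuine analogue of the $\Omega(\SL_p)=\laz$ computation; no $\mu_3$-descent arises.

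Your $\mu_3$-cover argument cannot be repaired by transfer alone. Since $W\to U$ is pulled back from the cubing map on $\mathbb{G}_m$, the associated line bundle on $U$ is trivial, so $\pi_*\pi^*=3$ on $K^0(U)$ and $\pi^*\pi_*=\sum_{\sigma\in\mu_3}\sigma^*$ on $K^0(W)$. Even if one knew the $\mu_3$-action on $K^0(W)=K^0(Z_{F(y)})$ were trivial, this would only yield $3\cdot K^0(Z_{F(y)})\subset\pi^*K^0(U)$, which does not force $K^0(Z_{F(y)})=\pi^*K^0(U)$ without additional control on torsion or on the cokernel of $\pi^*$. And $K^0$ does not satisfy \'etale descent in general, so one cannot simply descend along $W\to U$. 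In short, the cubic character of $N_J$ (as opposed to the multiplicative splitting $\GL_p\cong\SL_p\times\mathbb{G}_m$ available for the reduced norm) is not a cosmetic obstacle; it defeats the localization trick and forces a different tool, which in the paper is the branching rule plus Merkurjev's spectral sequence.
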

\begin{proof}
Let $Y$ be the variety of Borel subgroups of the group $G$. We need to check the conditions of
Proposition~\ref{moravaaffine2}. The $K^0$-Euler characteristic of $Y$ is invertible, since $Y$ is
geometrically cellular.

Let $y\in Y$ be a point. Then $G$ splits over $F(y)$, the variety $Z$ has a rational point over $F(y)$
and its stabilizer is the split group of type $\F_4$, i.e., $Z$ is isomorphic to $\E_6/\F_4$ over $F(y)$,
where $\E_6$ and $\F_4$ stand for the split groups of the respective Dynkin types.

By the branching rules the restriction homomorphism $R(\E_6)\to R(\F_4)$ is surjective, where $R(-)$
denotes the representation ring (see \cite[Section~2]{A12}). Therefore Merkurjev's spectral
sequence (see \cite[Section~6]{A12}) implies that $$K^0(\E_6/\F_4)\simeq K^0(F(y))\otimes_{R(\E_6)}R(\F_4)\simeq K^0(F(y)).$$
We are done.
\end{proof}

Consider the Witt-ring of the field $F$ and denote by $I$ its fundamental ideal.

\begin{prop}\label{moravaproj}
Let $m\ge 1$ and set $p=2$.
A regular even-dimensional quadratic form $q$ belongs to $I^m$ iff the Morava motives
$K(n)$ of the respective projective quadric are split for all $0\le n<m-1$.
\end{prop}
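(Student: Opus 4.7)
The plan is to use the motivic decomposition of the projective quadric $Q$ together with Proposition~\ref{pro62}, which describes exactly how a Rost motive $R_i$ behaves under the $K(n)$-realization. The base case $m=1$ is vacuous: every even-dimensional regular form lies in $I$, and the condition on the right is empty. I argue the two implications separately.

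For the forward direction, suppose $q\in I^m$. After peeling off hyperbolic planes I may assume $q$ is anisotropic, since each hyperbolic summand contributes only Tate summands to any oriented cohomology theory. For anisotropic $q\in I^m$, the motivic decomposition of projective quadrics (Vishik, together with the Rost--Voevodsky construction of norm varieties) shows that every indecomposable summand of the $\zz_{(2)}$-Chow motive of $Q$ is a shift of a Rost motive $R_i$ with $i\ge m$. These summands lift canonically to $\Omega$-motives as in the remark preceding Proposition~\ref{pro62}. For any $n$ with $0\le n<m-1$ we then have $n\le m-2<i-1$, so Proposition~\ref{pro62}(1) yields that each $R_i$ becomes a direct sum of $p=2$ Tate motives in $K(n)$. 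Combined with Rost nilpotency (Section~\ref{rostnil}), which transports the decomposition from a splitting field down to $F$, this gives the splitting of the $K(n)$-motive of $Q$.

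For the converse, I argue contrapositively: assume $q\notin I^m$, and let $k<m$ be the largest integer with $q\in I^k$, so that $e_k(q)\ne 0$ in $H^k_{\et}(F,\zz/2)$ and $k-1<m-1$. Writing $e_k(q)$ as a sum of pure symbols, I pass to the function field $E$ of a product of norm varieties that kill all pure-symbol components of $e_k(q)$ except one. Over $E$ the class $e_k(q_E)$ is then a non-zero pure symbol, so the $\zz_{(2)}$-Chow motive of $Q_E$ contains a Rost motive $R_k$ as a direct summand. By Proposition~\ref{pro62}(2), $R_k$ has a non-Tate indecomposable summand in $K(k-1)$, so the $K(k-1)$-motive of $Q_E$ is not split; via Rost nilpotency this obstruction descends to $F$, contradicting the assumed splitting of the $K(k-1)$-motive of $Q$ (with $k-1<m-1$).

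The main obstacle is in this reverse direction, specifically in the generic splitting step and in descending non-triviality back to $F$: one must ensure that $e_k(q)$ can be arranged to become a single pure symbol over a controlled extension $E$ without collapsing the motive of $Q$ itself, and then use Rost nilpotency for Morava $K(n)$-motives of twisted flag-type varieties to guarantee that the indecomposable summand of $K(k-1)(Q_E)$ is already witnessed over $F$. The forward direction is more routine once one grants the motivic decomposition theorem and Proposition~\ref{pro62}.
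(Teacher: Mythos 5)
Your proposal has a serious gap in the forward direction and a somewhat different (but undersupported) approach in the reverse direction; the paper's actual proof sidesteps both problems.

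For the direction $q\in I^m\Rightarrow$ split $K(n)$-motives, you claim that for anisotropic $q\in I^m$ every indecomposable Chow-motivic summand of $Q$ is a shifted Rost motive $R_i$ with $i\ge m$. This is false: the motivic decomposition of a general anisotropic quadric with $q\in I^m$ is not known (and in fact cannot be) a sum of Rost motives of pure symbols — only for Pfister forms and a handful of special forms does such a clean picture hold. Vishik's theory gives restrictions on the shape of the decomposition, but general forms in $I^m$ of large dimension have summands that are not Rost motives at all. The paper avoids appealing to any structure theorem for the Chow motive of $Q$: instead it writes $q$ in the Witt ring as a sum of (scaled) $s$-fold Pfister forms with $s\ge m$, inducts on the length of such a presentation, and uses the affine norm quadric $X^{\aff}$ of a subsymbol together with a localization-sequence diagram chase to propagate surjectivity of the restriction map $K(n)(Q\times Q)\to K(n)((Q\times Q)_K)$; the base case (a single Pfister form) is exactly where Proposition~\ref{pro62} enters. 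Your argument simply does not compile without the false decomposition claim.

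For the direction that non-split $K(n)$-motives force $q\notin I^m$ (i.e.\ your contrapositive with $k<m$ maximal and $q\in I^k$), the paper invokes \cite[Thm.~2.10]{OVV07}, which produces a field extension $K/F$ over which $q_K$ itself becomes, in the Witt ring, an anisotropic $k$-fold Pfister form; this directly puts a Rost motive $R_k$ into the motive of $Q_K$, and Proposition~\ref{pro62}(2) finishes. Your substitute — kill all but one pure-symbol component of $e_k(q)$ by passing to function fields of norm varieties — is weaker and leaves a real hole: knowing that $e_k(q_E)$ is a nonzero pure symbol does \emph{not} by itself imply that the Chow motive of $Q_E$ contains $R_k$ as a direct summand (the Rost motive sits in the motive of a norm quadric, and one needs $q_E$ to be Witt-equivalent to, or at least contain a Pfister neighbor of, the Pfister form of that symbol). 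You also risk that the auxiliary field extensions kill $q_E$ entirely or change its length in $I^k$ in an uncontrolled way; you flag this but do not resolve it. Finally, your appeal to Rost nilpotency at the end of this direction is misplaced: to conclude that non-splitting over $E$ implies non-splitting over $F$ you only need that splitness is preserved by scalar extension, which is immediate and has nothing to do with Rost nilpotency. Rost nilpotency is the tool for the opposite transfer (descending a decomposition from an extension \emph{down} to $F$), and the paper uses it exactly there, inside the forward direction.
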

\begin{proof}
Assume that $q$ does not belong to $I^m$. Let $1\le s<m$ be the maximal number with $q\in I^s$.
By \cite[Thm.~2.10]{OVV07} there exists a field extension $K$ of $F$ such that $q_K$ as an element of the
Witt-ring of $F$ is an anisotropic $s$-fold Pfister
form. By Prop.~\ref{pro62} its $(s-1)$-st Morava motive is not split. Contradiction.

Conversely, assume that $q$ belongs to $I^m$ and let $Q$ be the respective projective quadric.
Then we can present $q$ as a finite sum of (up to proportionality)
$s$-fold Pfister forms with $s\ge m$. We prove our statement using induction on the length of such a presentation in the Witt-ring.               
If $q$ is an $s$-fold Pfister-form, then, since $s\ge m>n+1$, by Prop.~\ref{pro62} the $K(n)$-motive of $Q$ is split.

Let $\alpha$ be an $s$-fold Pfister form in the decomposition of $q$. Let $X^{\aff}$ be the affine norm quadric of
dimension $2^{n}-1$ corresponding to a subsymbol of $\alpha$ from $H_{\et}^{n+1}(F,\zz/2)$ (note that $n+1<m\le s$).
Then the length of $q$ over $F(X^{\aff})$ is strictly smaller than the length of $q$ over $F$.

Consider the following commutative diagram of localization sequences:
\[
 \xymatrix{
\lim\limits_{\substack{\longrightarrow\\ Y'\subset Q\times Q}}K(n)(Y') \ar@{->}[r]\ar@{->}[d] & K(n)(Q\times Q) \ar@{->}[r]\ar@{->}[d] & K(n)(F(Q\times Q)) \ar@{->}[r] \ar@{->}[d]&0\\
\lim\limits_{\substack{\longrightarrow\\ Y'\subset Q\times Q}}K(n)(X^{\aff}\times Y') \ar@{->}[r] & K(n)(X^{\aff}\times Q\times Q) \ar@{->}[r] & K(n)(X^{\aff}_{F(Q\times Q)}) \ar@{->}[r] &0
 }
 \]
where the vertical arrows are pullbacks of the respective projections and the limits are taken
over all closed subvarieties of $Q\times Q$ of codimension $\ge 1$.

The right vertical arrow is an isomorphism by Corollary~\ref{moravaaffine}. The left vertical arrow is surjective by
induction on the dimension of the variety $Q\times Q$ (we do not use here that $Q$ is a quadric). Therefore by a diagram
chase the middle vertical arrow is surjective.

But by the localization sequence $$K(n)(X^{\aff}\times Q\times Q)\to K(n)((Q\times Q)_{F(X^{\aff})})$$
is surjective. By the induction hypothesis on the length of $q$, the restriction homomorphism
$$K(n)((Q\times Q)_{F(X^{\aff})})\to K(n)((Q\times Q)_K)$$
to a splitting field $K$ of $Q_{F(X^{\aff})}$ is surjective. Therefore the restriction homomorphism $$K(n)(Q\times Q)\to K(n)((Q\times Q)_K)$$
to the splitting field $K$ is surjective. In particular, since the projectors
for the Morava-motive of $Q$ lie in $K(n)(Q\times Q)$, it follows from Rost nilpotency that the $K(n)$-motive of $Q$ over $F$ is split.
\end{proof}

\begin{rem}
The same statement with a similar proof holds for the variety of totally isotropic
subspaces of dimension $k$ for all $1\le k\le (\dim q)/2$.
\end{rem}

\begin{thm}\label{moravarost}
Let $p$ be a prime number.
Let $G$ be a simple algebraic group over $F$ and let $X$ be the variety of Borel subgroups of $G$. Then
\begin{enumerate}
\item $G$ is of inner type iff the $K(0)$-motive of $X$ is split.
\item Assume that $G$ is of inner type. All Tits algebras of $G$ are split iff the $K^0$-motive with integral coefficients
of $X$ is split.
\item Assume that $G$ is of inner type and the $p$-components of the Tits algebras 
of $G$ are split. Then the $p$-component of the Rost invariant of $G$ is zero iff the $K(2)$-motive of $X$ is split.
\item Let $p=2$. Assume that $G$ is of type $\E_8$ with trivial Rost invariant. Then $G$ is split by an odd degree field
extension iff the $K(m)$-motive of $X$ is split for some $m\ge 4$ iff the $K(m)$-motive of $X$ is split for all $m\ge 4$.
\end{enumerate}
\end{thm}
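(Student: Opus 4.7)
My plan is to treat the four parts separately but via a uniform mechanism: in each case one reads off from the relevant cohomological invariant a pure symbol, attaches to it the generalized Rost motive $R_m$, and feeds $R_m$ into Proposition~\ref{pro62} at the appropriate height $n$. Throughout, Rost nilpotency (see \ref{rostnil}) is used to lift geometric decompositions from $\Fsep$ down to $F$. The crucial geometric input in each regime is a Chow-motivic decomposition of $\Mot(X)$ into Tate twists and twists of a single Rost motive, built on Panin, Vishik--Yagita, Petrov--Semenov--Zainoulline and \cite{Sem13}.

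For (1), over $\Fsep$ the variety $X$ is cellular and $K(0)(X_\sep)=\CH(X_\sep)\otimes\qq$ is a permutation $\Gamma$-module on the Schubert cells, whose Galois action factors through the outer automorphisms of the Dynkin diagram of $G$; hence the $K(0)$-motive of $X$ is split iff this outer action is trivial, iff $G$ is of inner type. For (2), one invokes directly the description of the $K^0$-motive of $X$ from Section~\ref{titsk0}: it equals $\bigoplus_{w\in W}L_w$, and each $L_w$ is Tate iff the corresponding Tits algebra $A_w$ is split; therefore splitness of all Tits algebras is equivalent to splitness of the $K^0$-motive.

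For (3), after passing to $\zz_{(p)}$-coefficients and using the assumption that the $p$-components of the Tits algebras vanish, a PSZ-type argument yields a Chow-motivic isomorphism $\Mot(X)\otimes\zz_{(p)}\simeq\bigoplus_i\zz_{(p)}(a_i)\oplus\bigoplus_j R_3(b_j)$, in which the pure symbol $\alpha\in H^3(F,\mu_p^{\otimes 2})$ giving rise to $R_3$ represents the $p$-component of the Rost invariant. Applying Proposition~\ref{pro62}(2) with $m=3$ and $n=m-1=2$: if the Rost invariant is non-trivial, every twist $R_3(b_j)$ contains an indecomposable non-Tate summand $L$ in $K(2)$ which, by Rost nilpotency, persists in the $K(2)$-motive of $X$; conversely if $\alpha=0$ every $R_3(b_j)$ becomes a sum of Tate motives in $K(2)$ and the $K(2)$-motive of $X$ splits.

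For (4) the same mechanism runs with $m=5$ and $p=2$. Triviality of the Rost invariant together with the construction of \cite{Sem13} produces a pure symbol $\alpha=f_5(G)\in H^5(F,\zz/2)$ and an analogous decomposition $\Mot(X)\otimes\zz_{(2)}\simeq\bigoplus_i\zz_{(2)}(a_i)\oplus\bigoplus_j R_5(b_j)$. For every $n\ge 4$ parts (2) and (3) of Proposition~\ref{pro62} show that the $K(n)$-motive of $R_5$ is split iff $\alpha=0$; and $\alpha=0$ iff $G$ is split by an odd degree extension, again by \cite{Sem13}. Thus the same symbol controls splitting uniformly in every $K(m)$ with $m\ge 4$, proving the chain of equivalences. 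The main obstacle throughout is the Chow-motivic decomposition of $\Mot(X)$ used in (3) and (4): erecting it rigorously requires the full PSZ apparatus and, for $\E_8$, the delicate analysis of \cite{Sem13}; everything afterwards is a formal consequence of Proposition~\ref{pro62} and Rost nilpotency.
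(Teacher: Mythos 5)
Your high-level strategy — decompose the motive of $X$ into Rost motives attached to a pure symbol and feed them into Proposition~\ref{pro62} — is the right spirit, and parts (1) and (2) are fine and match the paper. But in (3) and (4) there is a genuine gap at the very step you flag as ``the main obstacle,'' and also a structural omission that the uniform formulation hides.

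The asserted Chow-motivic decompositions $\Mot(X)\otimes\zz_{(p)}\simeq\bigoplus_i\zz_{(p)}(a_i)\oplus\bigoplus_j R_3(b_j)$ in (3) and the analogue with $R_5$ in (4) do not exist over the base field $F$ in general, because the Rost invariant (and the degree-$5$ invariant $u$ of \cite{Sem13}) is typically \emph{not} a pure symbol, and without a pure symbol there is no Rost motive to speak of. The paper handles this by first reducing, via the transfer argument available because primes $\neq p$ are invertible in the coefficient ring, and then passing to a field extension where the invariant becomes a pure nonzero symbol: \cite[Thm.~5.7]{PS10} supplies such an extension for the Rost invariant in the exceptional cases, and \cite[Thm.~2.10]{OVV07} does so for the degree-$5$ invariant $u$. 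Only after that does the PSZ decomposition of $\Mot(X)$ into twists of a single Rost motive become available, and Rost nilpotency is then used to get back to the original field. Your proof omits all of this.

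Two further omissions. First, for classical types $\B$ and $\D$ the motive of the Borel variety is not a sum of Tate twists plus copies of a single $R_3$, even under your hypotheses; the paper runs an entirely separate argument there, namely the localization diagram chase of Proposition~\ref{moravaproj} showing that the $K(n)$-motive of a projective quadric with $q\in I^m$ is split for all $n<m-1$. Second, in the ``Rost invariant trivial $\Rightarrow K(2)$ split'' direction of (3) for $\E_8$, triviality of the Rost invariant does not make the motive a sum of Tate twists: the degree-$5$ invariant $u$ may still be nonzero, in which case the motive of $X$ (after reducing to the pure-symbol case) is built from Rost motives $R_5$, and the point the paper makes is that for these $R_5$'s Proposition~\ref{pro62}(1) applies with $n=2<5-1$, so the $K(2)$-Rost motive is nevertheless split. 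Your proof of (3) says nothing about this case, and as written it would wrongly conclude that the $K(2)$-motive splits simply because ``there are no $R_3$'s.''
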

\begin{proof}
(1) Since $K(0)=\CH\otimes\qq$ by definition, the statement follows from the fact that $G$ is of inner type
iff the absolute Galois group of $F$ acts trivially on $\CH(X_{\Fsep})\otimes\qq$ and from the fact that over a
splitting field $\Fsep$ of $G$ the variety $X_{\Fsep}$ is cellular.

(2) Follows from \cite{Pa94}; see also Section~\ref{titsk0}.

(3) First we make several standard reductions.
Since all prime numbers coprime to $p$ are invertible in the coefficient ring of the Morava $K$-theory,
by transfer argument we are free to take finite field extensions of the base field of degree coprime to $p$.
Hence we can assume that not only the $p$-components of the Tits algebras are split, but that the
Tits algebras are completely split (and the same for the Rost invariant).

If $G$ is a group of inner type $\A$ or $\C$ with trivial Tits algebras, then $G$ is split and the statement
follows. If $G$ is a group of type $\B$ or $\D$, then the statement follows from Proposition~\ref{moravaproj}
(In Prop.~\ref{moravaproj} we assume that the quadratic form $q$ is even-dimensional.
We use it only to conclude that $q\in I$ as a starting point in the proof.)

Let now $G$ be a group of an exceptional type.
Assume that the $K(2)$-motive of $X$ is split, but the Rost invariant of $G$ is not trivial.
By \cite[Thm.~5.7]{PS10} if $G$ is not split already, there is a field extension $K$ of $F$ such that the Rost invariant of $G_K$
is a pure non-zero symbol (For example, if $p=2$ and $G$ is of type $\E_8$, then one
can take $K=F(Y)$ with $Y$ the variety of maximal parabolic subgroups of $G$ of type $6$;
enumeration of simple roots follows Bourbaki). Then the motive of $X$ is a direct sum of Rost motives corresponding to this symbol
of degree $3$ (see \cite{PSZ08}). This gives a contradiction with Prop.~\ref{pro62}.

Conversely, if the Rost invariant of $G$
is zero and $G$ is not of type $\E_8$, then by \cite[Thm.~0.5]{Ga01} the group $G$ is split and the statement follows.
If $G$ is of type $\E_8$ with trivial Rost invariant, then by \cite[Theorem.~8.7]{Sem13} $G$ has an invariant $u\in H^5_{\et}(F,\zz/2)$
such that for a field extension $K/F$ the invariant $u_K=0$ iff $G_K$ splits over a field extension
of $K$ of odd degree. Exactly as in the proof of Prop.~\ref{moravaproj} we can reduce to the case when $u$ is a pure symbol.
But then by \cite{PSZ08} the motive of the variety $X$ (Chow motive and cobordism motive and hence Morava-motive) is a direct sum of Rost motives for $u$ and
by Prop.~\ref{pro62} the $K(2)$-Rost motive for a symbol of degree $5>3$ is split.

(4) If $G$ is split by an odd degree field extension, then the $K(m)$-motives of $X$ are split for all $m$, since $p=2$.
Conversely, if $G$ does not split over an odd degree field extension of $F$, then the invariant $u$ is not zero.
By \cite[Thm.~2.10]{OVV07} there is field extension $K$ of $F$ such that $u_K$ is a non-zero pure symbol.
Over $K$ the motive of $X$ is a direct sum of Rost motives corresponding to $u_K$. By Prop.~\ref{pro62} the $K(m)$-Rost
motives for a symbol of degree $5$ are not split, if $m\ge 4$.
\end{proof}

Finally we remark that sequence~\eqref{voev} can be used to define the Rost invariant in general,
the invariant $f_5$ for groups of type $\F_4$ and an invariant of degree $5$ for groups of type $\E_8$ with trivial Rost invariant (see \cite{Sem13}).
Namely, for the Rost invariant let $G$ be a simple simply-connected algebraic group over $F$.
Let $Y$ be a $G$-torsor and set $n=3$. Then sequence~\eqref{voev} gives an exact sequence
$$0\to H_{\mathcal{M}}^{3,2}(\mathcal{X}_Y,\qq/\zz)\to \Ker\big(H^3_{\et}(F,\qq/\zz(2))\to H_{\et}^3(F(Y),\qq/\zz(2))\big)\to 0$$
But by sequence~\eqref{rostseq} $\Ker\big(H^3_{\et}(F,\qq/\zz(2))\to H_{\et}^3(F(Y),\qq/\zz(2))\big)$ is a finite cyclic
group. Therefore $H_{\mathcal{M}}^{3,2}(\mathcal{X}_Y,\qq/\zz)$ is a finite cyclic group and the Rost invariant
of $Y$ is the image of $1\in H_{\mathcal{M}}^{3,2}(\mathcal{X}_Y,\qq/\zz)$ in $H_{\et}^3(F,\qq/\zz(2))$.

To construct invariants of degree $5$ for $\F_4$ (resp. for $\E_8$) one takes $n=5$ and $Y$ to be the variety of parabolic subgroups
of type $4$ for $\F_4$ (the enumeration of simple roots follows Bourbaki) and resp. the variety of parabolic subgroups of any type for $\E_8$.
In both cases $H^{5,4}_{\mathcal{M}}(\mathcal{X}_Y,\qq/\zz)$ is cyclic of order $2$ and the invariant is the
image of $1\in H^{5,4}_{\mathcal{M}}(\mathcal{X}_Y,\qq/\zz)$ in $H^5_{\et}(F,\qq/\zz(4))$; see \cite{Sem13}.

\bibliographystyle{chicago}

\medskip

\medskip

\noindent
\sc{Nikita Semenov\\
Institut f\"ur Mathematik, Johannes Gutenberg-Universit\"at
Mainz, Staudingerweg 9, D-55128, Mainz, Germany}\\
{\tt semenov@uni-mainz.de}

\end{document}